\documentclass[12pt]{article}

\usepackage{amsfonts,amsmath,amssymb}
\usepackage{mathrsfs,mathtools,url}
\usepackage{latexsym}
\usepackage{tikz,color}
\usepackage[T1]{fontenc}
\usepackage{array}
\usepackage{xspace}
\usepackage{cite}

\newtheorem{theorem}{Theorem}[section]

\newtheorem{lemma}[theorem]{Lemma}
\newtheorem{proposition}[theorem]{Proposition}

\newtheorem{corollary}[theorem]{Corollary}

\newtheorem{problem}{Problem}
\newtheorem{claim}{Claim}

\DeclareMathOperator {\diam} {diam}

\DeclareMathOperator {\mut} {\mu_{\rm t}}
\DeclareMathOperator {\muo} {\mu_{\rm o}}
\DeclareMathOperator {\mud} {\mu_{\rm d}}
\DeclareMathOperator {\muti} {\mu_{\rm t}^{\rm i}}
\DeclareMathOperator {\muoi} {\mu_{\rm o}^{\rm i}}
\DeclareMathOperator {\mudi} {\mu_{\rm d}^{\rm i}}
\DeclareMathOperator {\mui} {\mu^{\rm i}}

\def\cp{\,\square\,}
\newcommand{\proof}{\noindent{\bf Proof.\ }}
\newcommand{\qed}{\hfill $\square$ \bigskip}
\newcommand{\smallqed}{{\tiny ($\Box$)}}

\title{Independent mutual-visibility sets and distance edge-critical graphs}

\author{Jing Tian$\/^{a}$, Csilla Bujt\'{a}s$^{b, c}$, Sandi Klav\v{z}ar$^{b, c, d}$ \\\\
$^{a}$ \small School of Science, Zhejiang University of Science and Technology, \\
\small Hangzhou, Zhejiang 310023, PR China\\
\small {\tt jingtian526@126.com \quad ORCID: 0000-0002-1578-4798}\\
$^{b}$ \small Faculty of Mathematics and Physics, University of Ljubljana, Slovenia\\
\small {\tt csilla.bujtas@fmf.uni-lj.si\quad ORCID: 0000-0002-0511-5291}\\
\small {\tt sandi.klavzar@fmf.uni-lj.si\quad ORCID: 0000-0002-1556-4744}\\
$^{c}$ \small Institute of Mathematics, Physics and Mechanics, Ljubljana, Slovenia \\
$^{d}$ \small Faculty of Natural Sciences and Mathematics, University of Maribor, Slovenia\\
}

\date{\today}

\begin{document}

\maketitle

\begin{abstract}
In this paper, connections between independent sets and the variety of mutual-visibility sets are studied. It is proved that every outer mutual-visibility set of a graph is independent if and only if the graph is distance edge-critical. Several constructions yielding distance edge-critical graphs are given. Graphs in which every independent set is a total mutual-visibility or a dual mutual-visibility set are characterized, as well as graphs in which every total mutual-visibility set is independent. Along the way the total mutual-visibility number of some graphs derived from fullerenes is determined. Graphs in which every independent set is a mutual-visibility set are discussed and characterized over diameter four graphs. It is proved that determining the maximum cardinality of an independent mutual-visibility set and deciding whether it equals the independence number of a graph are NP-hard problems, and the same is true for independent total, outer and dual mutual-visibility sets. 
\end{abstract}

\noindent
\textbf{Keywords:}  mutual-visibility set; independent set; distance edge-critical graph; dense diamond graph; computational complexity

\medskip\noindent
\textbf{AMS Math.\ Subj.\ Class.\ (2020)}: 05C12, 05C69, 05C76, 68Q17

\section{Introduction}

The mutual-visibility problem was in 2022 transferred from computer science to graph theory in Di Stefano's paper~\cite{distefano-2022}. The problem immediately attracted widespread interest, in part because it is closely related to several other important mathematical topics. Let us emphasize its connections to Tur\'an type problems~\cite{boruzanli-2024, bujtas-2025a, cicerone-2024b}, to the famous open Zarankiewicz problem~\cite{cicerone-2023}, and to the Bollob\'as-Wessel theorem~\cite{bollobas-1967, wessel-1966}, see~\cite{BresarYero-2024}. Moreover, to prove that there exist large mutual-visibility sets in hypercubes, Axenovich and Liu~\cite{axenovich-2024a+} applied a recent  breakthrough result on daisy-free hypergraphs due to Ellis, Ivan, and Leader~\cite{ellis-2024}. From among other papers on graph mutual-visibility we highlight~\cite{cicerone-2025, dettlaff-2026, korze-2024, korze-2025, kuziak-2023, roy-2025a, roy-2025b, tian-2024}. 

While investigating mutual-visibility in strong products~\cite{cicerone-2024a}, the need for the total mutual-visibility concept appeared. The whole variety was later (although published earlier) introduced in~\cite{CiDiDrHeKlYe-2023} as follows. If $X$ is a subset of vertices of a graph $G$, then $u,v\in V(G)$ are {\em $X$-visible}, if there exists a shortest $u,v$-path $P$ such that $V(P)\cap X \subseteq \{u,v\}$. If every two vertices from $X$ are $X$-visible, then $X$ is a \emph{mutual-visibility set} ({\em MV set} for short), and if any two vertices from $V(G)$ are $X$-visible, then $X$ is a \emph{total mutual-visibility set} ({\em total MV set}). Further, if any two vertices from $X$ and any two vertices from $\overline{X}$ are $X$-visible, then $X$ is a \emph{dual mutual-visibility set} ({\em dual MV set}), and if any two vertices from $X$ are $X$-visible, and any two vertices $x\in X$, $y\in \overline{X}$ are $X$-visible, then $X$ is an \emph{outer mutual-visibility set} ({\em outer MV set}). ($\overline{X}$ stands for $V(G)\setminus X$.) The cardinality of a largest MV set (resp.\ total/dual/outer MV set) is the \emph{mutual-visibility number} (resp.\ \emph{total/dual/outer mutual-visibility number}) $\mu(G)$ (resp.\ $\mut(G)$, $\mud(G)$, $\muo(G)$) of $G$. 

To derive general bounds for the mutual-visibility number in Cartesian products~\cite{cicerone-2023}, independent MV sets were introduced as those MV sets which induce edgeless graphs. Next, Kor\v{z}e and Vesel~\cite[Proposition 3.8]{korze-2025} proved that if $d\ge 1$, then $\mut(Q_d) = 2\alpha(Q_d')$, where $Q_d'$ is the dual cube of dimension $d$, and $\alpha(\cdot)$ stands for the independence number. The following result intrigued us further, where $g(G)$ denotes the girth of a graph $G$. 

\begin{proposition} {\rm \cite[Lemma 5.2]{cicerone-2024b}}
\label{prop:outer-independent}
If $G$ is a connected graph of order at least $3$ and with $g(G) \ge 5$, then every outer MV set of $G$ is independent.
\end{proposition}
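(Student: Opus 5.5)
Suppose, for a contradiction, that $X$ is an outer MV set of $G$ which is not independent. Then $X$ contains two adjacent vertices $u$ and $v$. I will find a vertex $y\in\overline{X}$ and a vertex $x\in\{u,v\}$ such that every shortest $x,y$-path passes through the other vertex of the edge $uv$. That vertex lies in $X$ and is an internal vertex of the path, so $x$ and $y$ are not $X$-visible. This contradicts the outer condition.

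\emph{Choosing $y$.} Since $G$ is connected and has order at least $3$, some vertex outside $\{u,v\}$ is adjacent to $u$ or $v$. By symmetry, let $w$ be a neighbour of $v$ with $w\neq u$. The girth assumption $g(G)\ge 5$ (indeed $g(G)\ge 4$ suffices for this step) gives $w\notin N(u)$, since otherwise $u,v,w$ would form a triangle. Hence $d(u,w)=2$ and $u v w$ is a shortest $u,w$-path.

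\emph{Uniqueness of the shortest path.} Any other shortest $u,w$-path has the form $u z w$ with $z\neq v$. Together with $u v w$ it would close a $4$-cycle $u v w z$, which is impossible since $g(G)\ge 5$. So $u v w$ is the unique shortest $u,w$-path, and its internal vertex $v$ belongs to $X$.

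\emph{Concluding.} If $w\notin X$, then $u\in X$ and $w\in\overline{X}$ are not $X$-visible, contradicting that $X$ is an outer MV set. If $w\in X$, then $u,w\in X$ are not $X$-visible, contradicting the mutual-visibility condition that an outer MV set also satisfies. In both cases we reach a contradiction, so $X$ must be independent.

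The only step needing care is the existence of the third vertex $w$ adjacent to the edge $uv$, and the connectivity and order hypotheses give it directly. The real content is that girth at least $5$ rules out both triangles and $4$-cycles, so every path of length $2$ is the unique geodesic between its endpoints. The same argument shows that independence is forced whenever every edge extends to such a unique geodesic of length $2$. This points toward the paper's characterization of graphs with this property as the distance edge-critical graphs.
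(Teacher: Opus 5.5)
Your proof is correct and is essentially the paper's argument. The paper obtains this statement as a special case of Theorem~\ref{thm:outer-independent}: girth at least $5$ makes every edge lie on a convex $P_3$, i.e., $G$ is edge-critical, and then, exactly as you do, it takes a convex $P_3$ $uvw$ through an edge $uv$ inside $X$ and observes that $u$ and $w$ are not $X$-visible, whether or not $w\in X$.
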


In view of this proposition and other mentioned results, we set ourselves the task of systematically reviewing when all independent sets occur to be MV sets and the other way around. 

After some preparation in the next section, we consider outer mutual-visibility in Section~\ref{sec:outer}. We prove that every outer MV set of a graph is independent if and only if the graph is distance edge-critical. We also  provide several constructions yielding distance edge-critical graphs, and consider the graphs in which every independent set is an outer MV set. In Section~\ref{sec:total/dual} we prove that the so-called dense diamond graphs characterize the graphs in which every independent set is a total MV or a dual MV set for that matter. We also characterize graphs in which every total MV set is independent, and provide a partial description of graphs in which every dual MV set is independent. Along the way we determine the total mutual-visibility number of some graphs derived from fullerenes. In Section~\ref{sec:visibility} we first describe graphs in which every independent set is a MV set as the graphs which contain no independent, strong critical set. Then we give a polynomial-time characterization of such graphs among diameter four graphs, and use the strong product to construct graphs of an arbitrarily large diameter in which every independent set is a MV set. In Section~\ref{sec:NP-c} we prove that determining the maximum cardinality of an independent mutual-visibility set and deciding whether it equals the independence number of a graph are NP-hard problems over the class of graphs of diameter $d$ for every fixed $d\ge 4$, and that the same is true for independent total, outer and dual mutual-visibility sets. We conclude the paper by proposing several open problems. 

\section{Preliminaries}

Let $n(G)$ and $m(G)$ respectively denote the order and the size of a graph $G$. For vertices $u$ and $v$ of $G$, the length of a shortest $u,v$-path is the {\em distance} between $u$ and $v$ and denoted by $d_G(u,v)$. The {\em interval} $I_G[u,v]$ is the set of vertices which lie on shortest $u,v$-paths in $G$. The {\em diameter} $\diam(G)$ of $G$ is the length of a longest shortest path in $G$. A subgraph $H$ of $G$ is {\em convex}, if for every two vertices of $H$, all shortest $u,v$-paths belong to $H$. 

A set $S\subset V(G)$ of $G$ is {\em distance $\{u, v\}$-critical}, if $u, v\in V(G)\setminus S$ and $d_{G-S}(u,v) > d_G(u,v)$. In the case when $S = \{x\}$ we say that $x$ is a {\em distance critical vertex}. Similarly, an edge $xy$ is {\em distance critical}, if there exist vertices $u, v\in V(G)$, where $\{u,v\} \ne \{x,y\}$, such that $d_{G-xy}(u,v) > d_G(u,v)$. Further, $G$ is {\em distance vertex-critical} if every vertex of $G$ is distance critical and is  {\em distance edge-critical} if every edge of $G$ is distance critical. Since this will be the only type of criticality considered in this paper, we will simplify the terminology to {\em critical set}, {\em critical vertex}, {\em critical edge}, {\em vertex-critical graph}, and {\em edge-critical graph}. 

The first statement of the following lemma can be found in~\cite[Theorem 1]{erdos-1980}. In this paper by Erd\H{o}s and Howorka, where vertex-critical graphs were introduced, the focus was on the largest number of edges such a graph can contain. As far as we know, nothing followed in this direction for almost half of a century, until these graphs were systematically researched for the first time in the recent paper~\cite{cooper-2025}.

\begin{lemma}
\label{lem:critical}
If $G$ is a connected graph, then the following hold.
\begin{enumerate}
    \item[(i)] $G$ is vertex-critical if and only if every vertex of $G$ is the middle vertex of a convex $P_3$~{\rm \cite{erdos-1980}}. 
    \item[(ii)] $G$ is edge-critical if and only if every edge of $G$ lies on a convex $P_3$.
\end{enumerate}
\end{lemma}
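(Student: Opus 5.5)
Part (i) is the Erd\H{o}s--Howorka result and may be quoted, so the work is part (ii). We will prove both directions by relating the criticality of an edge $xy$ to a convex path $P_3$ containing it. Recall that a $P_3$ $u\,w\,v$ is convex exactly when $d_G(u,v)=2$ and $w$ is the unique common neighbour of $u$ and $v$, i.e.\ $I_G[u,v]=\{u,w,v\}$.

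\emph{Sufficiency.} Suppose the edge $xy$ lies on a convex $P_3$, say $u\,x\,y$ with $I_G[u,y]=\{u,x,y\}$ (the case where the path is $x\,y\,v$ is symmetric). Then $\{u,y\}\neq\{x,y\}$ because $u\neq x$. In $G$ the only shortest $u,y$-path is $u\,x\,y$, and it uses $xy$. In $G-xy$ the distance cannot be $1$, since $uy\notin E(G)$. It cannot be $2$ either, because a second $u,y$-path of length $2$ would need a common neighbour of $u$ and $y$ other than $x$. Hence $d_{G-xy}(u,y)>2=d_G(u,y)$, so $xy$ is critical.

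\emph{Necessity.} This direction needs more care. Let $xy$ be critical, witnessed by $u,v$ with $\{u,v\}\ne\{x,y\}$ and $d_{G-xy}(u,v)>d_G(u,v)$. Then every shortest $u,v$-path in $G$ uses $xy$. We may orient things so that $d_G(u,x)<d_G(u,y)$, which gives $d(u,v)=d(u,x)+1+d(y,v)$. Since $\{u,v\}\neq\{x,y\}$, either $u\ne x$ or $v\ne y$; by symmetry assume $u\neq x$. Let $z$ be the neighbour of $x$ on some shortest $u,x$-path, so $d(u,z)=d(u,x)-1$, and consider the path $z\,x\,y$.

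\emph{Claim: $z\,x\,y$ is convex.} First, $zy\notin E(G)$. Indeed, if $zy$ were an edge, then $d(u,y)\le d(u,z)+1=d(u,x)$, contradicting $d(u,y)=d(u,x)+1$. So $d(z,y)=2$. Second, suppose $w\ne x$ were another common neighbour of $z$ and $y$. Then the walk formed by a shortest $u,z$-path, followed by $z\,w\,y$, followed by a shortest $y,v$-path, has length $d(u,z)+2+d(y,v)=d(u,v)$. It therefore contains a shortest $u,v$-path avoiding the edge $xy$: the middle part $z\,w\,y$ does not use $xy$ since $w\ne x$, and the two end segments are shortest paths to $x$ or from $y$. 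This contradicts criticality.

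\emph{Main obstacle.} The delicate point is checking that the end segments cannot themselves contain the edge $xy$. A shortest $u,z$-path has length $d(u,x)-1$, so it cannot reach $y$, which lies at distance $d(u,x)+1$ from $u$; hence it avoids $xy$. Symmetrically, a shortest $y,v$-path begins at $y$; if it traversed $xy$ it would pass through $x$, but $d(x,v)=d(y,v)+1$ prevents $x$ from lying on it. With this settled, $xy$ lies on the convex $P_3$ $z\,x\,y$, which completes (ii).
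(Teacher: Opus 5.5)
Your proof is correct and follows the same route as the paper: sufficiency via $d_{G-e}(u,w)\ge 3$ for a convex $P_3$ $u\,v\,w$ containing $e$, and necessity by extending $e$ by one edge along a shortest path of the witness pair, where an alternative middle vertex would give a shortest path avoiding $e$. The paper is slightly more direct: it swaps the middle vertex inside one fixed shortest path $P$, which removes an endpoint of $e$ from the path altogether, so the end-segment bookkeeping in your ``main obstacle'' paragraph is not needed.
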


\proof
We only need to prove (ii). Assume first that every edge of $G$ lies on a convex $P_3$ and let $e=uv$ be an arbitrary edge. Let $uvw$ be a convex $P_3$ containing $e$. Then $d_{G-e}(u,w) \ge 3$, meaning that $e$ is a critical edge. Since this conclusion holds for every edge, $G$ is edge-critical. 

Assume second that $G$ is an edge-critical graph and let $e=uv$ be an arbitrary edge of $G$. Let $x$ and $y$ be vertices such that $d_{G-e}(x,y) > d_G(x,y)$, where $\{x,y\} \ne \{u,v\}$. This means that every shortest $x,y$-path in $G$ contains $e$. Consider an arbitrary shortest $x,y$-path $P$. Then $P$ contains a subpath $P' = uvw$ or $wuv$. Since every shortest $x,y$-path contains $e=uv$, the subpath $P'$ is a convex $P_3$ containing $e$.
\qed

The sets of vertex-critical and edge-critical graphs are incomparable. For instance, the graph obtained from $C_4$ by attaching a pendant vertex to two of its non-adjacent vertices is edge-critical but not vertex-critical. On the other hand, the Wagner graph (as shown in Fig.~\ref{fig:C8}) is vertex-critical but not edge-critical. 

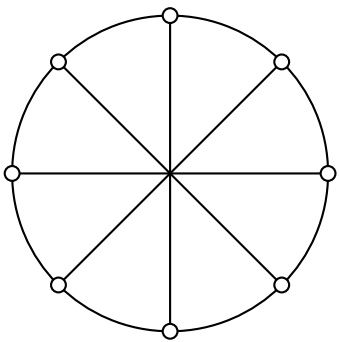
\begin{figure}[ht!]
\begin{center}
\begin{tikzpicture}[scale=0.7,style=thick,x=1cm,y=1cm]
\def\vr{4pt}
\def\radius{3cm}
    
\begin{scope}[xshift=0cm, yshift=0cm] 
\coordinate(x1) at (0,3);
\coordinate(x2) at (2.12,2.12);
\coordinate(x3) at (3,0);
\coordinate(x4) at (2.12,-2.12);
\coordinate(x5) at (0,-3);
\coordinate(x6) at (-2.12,-2.12);
\coordinate(x7) at (-3,0);
\coordinate(x8) at (-2.12,2.12);
\draw[] (0,0) circle[radius=\radius];		
\draw (x1) -- (x5);
\draw (x2) -- (x6);
\draw (x3) -- (x7);
\draw (x4) -- (x8);

\draw(x1)[fill=white] circle(\vr);
\draw(x2)[fill=white] circle(\vr);
\draw(x3)[fill=white] circle(\vr);
\draw(x4)[fill=white] circle(\vr);
\draw(x5)[fill=white] circle(\vr);
\draw(x6)[fill=white] circle(\vr);
\draw(x7)[fill=white] circle(\vr);
\draw(x8)[fill=white] circle(\vr);
\end{scope}

\end{tikzpicture}
\caption{Wagner graph $M_8$}
\label{fig:C8}
\end{center}
\end{figure}

Let $G$ and $H$ be disjoint graphs. Then the {\em join} $G\vee H$ is the graph obtained from the disjoint union of $G$ and $H$ by adding the edge $gh$ for every $g\in V(G)$ and every $h\in V(H)$. Further, when we say that $X$ is a {\em join graph}, we mean that $X$ can be represented as $G\vee H$. The Cartesian product $G\cp H$ has the vertex set $V(G)\times V(H)$, vertices $(g,h)$ and $(g',h')$ being adjacent if either $g=g'$ and $hh'\in E(H)$, or $h=h'$ and $gg'\in E(G)$. The {\em strong product}  $G\boxtimes H$ has the vertex set $V(G\boxtimes H) = V(G) \times V(H)$ and $E(G\boxtimes H) = E(G\cp H)\cup\{\{(g,h),(g',h')\}: gg'\in E(G)\ {\rm and}\ hh'\in E(H)\}$. The subgraph of $G\cp H$ (resp., $G\boxtimes H$) induced by the vertices $(g,h)$, $g\in V(G)$, is called a {\em $G$-layer} of $G\cp H$ (resp., $G\boxtimes H$) and is denoted by $G^h$. For a vertex $g\in G$, the $H$-layer $^gH$ is defined analogously. Note that $G^h \cong G$ and $^gH \cong H$.

To conclude the preliminaries, we recall the following result needed later on. 

\begin{theorem} {\rm \cite[Theorem 5.2]{Bujtas-2025b}}
\label{thm:distance two}
If $G$ is a connected graph and $X\subseteq V(G)$, then the following statements are equivalent. 
\begin{enumerate}
    \item[(i)] $X$ is a total MV set of $G$.
    \item[(ii)] Any two vertices $u$ and $v$ of $G$ with $d_G(u,v) = 2$ are $X$-visible.  
    \item[(iii)] Any two vertices $u$ and $v$ of $G$ with $d_G(u,v) = 2$ satisfy $N_G(u) \cap N_G(v) \not\subseteq X$.
\end{enumerate}
\end{theorem}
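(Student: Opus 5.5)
The plan is to prove the cycle (i)$\Rightarrow$(ii)$\Rightarrow$(iii)$\Rightarrow$(ii)$\Rightarrow$(i). Only the last implication needs an actual argument.

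\emph{The routine implications.} (i)$\Rightarrow$(ii) is immediate, since a total MV set makes every pair of vertices $X$-visible, in particular every pair at distance $2$.

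For (ii)$\Leftrightarrow$(iii), observe that if $d_G(u,v)=2$, then the shortest $u,v$-paths are exactly the paths $uwv$ with $w\in N_G(u)\cap N_G(v)$. Each such path has $w$ as its only internal vertex. Hence $u$ and $v$ are $X$-visible if and only if some common neighbour $w$ lies outside $X$, that is, if and only if $N_G(u)\cap N_G(v)\not\subseteq X$.

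\emph{The main step, (ii)$\Rightarrow$(i).} Let $u,v\in V(G)$ be arbitrary. If $d_G(u,v)\le 1$, a shortest $u,v$-path has no internal vertices, so $u$ and $v$ are trivially $X$-visible. (Connectivity of $G$ guarantees that shortest paths exist.) So assume $d_G(u,v)=k\ge 2$.

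Among all shortest $u,v$-paths, choose $P=x_0x_1\cdots x_k$ (with $x_0=u$, $x_k=v$) minimizing the number of internal vertices that belong to $X$. Suppose, for contradiction, that some internal vertex $x_i$, $1\le i\le k-1$, lies in $X$. Since $P$ is a shortest path, $d_G(x_{i-1},x_{i+1})=2$. By (ii), and equivalently (iii), there is a common neighbour $w\notin X$ of $x_{i-1}$ and $x_{i+1}$.

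Replace $x_i$ by $w$ in $P$. The resulting walk is a $u,v$-walk of length $k$ and hence a shortest path. In particular $w$ is not already on $P$, since otherwise a shorter $u,v$-walk would exist. The new path has one fewer internal vertex in $X$, contradicting the choice of $P$. Therefore no internal vertex of $P$ lies in $X$, so $u$ and $v$ are $X$-visible. As $u,v$ were arbitrary, $X$ is a total MV set.

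The only point needing care is exactly this local-replacement argument. One must check that swapping a single internal vertex preserves both the length and the path property, and that the endpoints $u,v$ (which may themselves lie in $X$) are never altered. Both follow directly from $P$ being geodesic, so I do not expect a genuine obstacle here.
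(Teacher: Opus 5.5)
Your proof is correct. The equivalence (ii)$\Leftrightarrow$(iii) is exactly the observation that the geodesics between vertices at distance $2$ are the paths $uwv$ with $w\in N_G(u)\cap N_G(v)$. For (ii)$\Rightarrow$(i), choosing a shortest $u,v$-path with the fewest internal vertices in $X$ and swapping one internal $X$-vertex for a common neighbour outside $X$ settles the implication.

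The paper does not prove this theorem itself; it quotes it from Bujt\'as, Klav\v{z}ar and Tian. Your local-replacement argument is the natural proof, and it uses the same vertex-swapping idea the paper employs elsewhere, for instance in showing that independent sets of $G\boxtimes K_n$ are total MV sets.
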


\section{Outer mutual-visibility and independence}
\label{sec:outer}

As already mentioned, Proposition~\ref{prop:outer-independent} was one of our main motivations for the present investigation. We now extend this results.  

\begin{theorem} 
\label{thm:outer-independent}
If $G$ is a connected graph of order at least three, then the following statements are equivalent.
\begin{itemize}
\item[$(i)$] Every outer MV set of $G$ is independent.
\item[$(ii)$] $G$ is an edge-critical graph.
\end{itemize}
\end{theorem}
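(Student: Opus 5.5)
We prove both directions through Lemma~\ref{lem:critical}(ii), which says that $G$ is edge-critical exactly when every edge of $G$ lies on a convex $P_3$.

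\emph{(ii)$\Rightarrow$(i).} Assume $G$ is edge-critical and let $X$ be an outer MV set. Suppose, for a contradiction, that $X$ contains two adjacent vertices $u$ and $v$. By Lemma~\ref{lem:critical}(ii) the edge $uv$ lies on a convex $P_3$, which is either $uvw$ or $vuw$; by symmetry take $uvw$. Convexity gives $d_G(u,w)=2$ and $N_G(u)\cap N_G(w)=\{v\}$, so $uvw$ is the unique shortest $u,w$-path. This path contains $v\in X$ as an internal vertex. If $w\notin X$, then $u\in X$ and $w\in\overline{X}$ are not $X$-visible. If $w\in X$, then $u,w\in X$ are not $X$-visible. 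Either way the outer MV property fails, so $X$ is independent.

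\emph{(i)$\Rightarrow$(ii).} We argue by contraposition. Suppose $G$ is not edge-critical. By Lemma~\ref{lem:critical}(ii) there is an edge $uv$ lying on no convex $P_3$, and we claim that $X=\{u,v\}$ is an outer MV set. Since $X$ is not independent, this contradicts (i).
\begin{itemize}
\item The pair $u,v$ is adjacent, hence trivially $X$-visible.
\item Take $x\in X$, say $x=u$, and $y\notin X$. We need a shortest $u,y$-path that avoids $v$. Suppose instead that every shortest $u,y$-path passes through $v$.
\item Then $d(u,y)\ge 2$, because $y\neq v$ and $y\neq u$. Since every shortest path goes through $v$ and $d(u,v)=1$, each such path has the form $u,v,w,\dots,y$ with $w\ne u$.
\item Because $uv$ lies on no convex $P_3$, the path $uvw$ is not convex. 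Together with $d(u,w)=2$, this means $u$ and $w$ have a common neighbour $v'\ne v$.
\item Replacing $v$ by $v'$ gives a path $u,v',w,\dots,y$ of the same length that avoids $v$, a contradiction.
\end{itemize}

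The main subtle point is the definition of convexity. We must make sure that ``convex $P_3$ $uvw$'' amounts to $d(u,w)=2$ together with $v$ being the unique common neighbour of $u$ and $w$, so that the induced-path requirement is automatic. We should also check where the hypothesis that $G$ has order at least three is used. It ensures that edges can lie on $P_3$'s at all; for $K_2$, the set $V(G)$ would be an outer MV set that is not independent, even though the graph is vacuously non-critical. Neither direction needs more than this.
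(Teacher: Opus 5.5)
Your proof is correct and is essentially the paper's. Direction (ii)$\Rightarrow$(i) is identical, and your contrapositive for (i)$\Rightarrow$(ii) is the paper's direct argument with its implicit step spelled out: a second common neighbour $v'$ of $u$ and $w$ would give a shortest path from $u$ avoiding $v$.

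One small quibble concerns your closing remark. The order-three hypothesis is never used, and $K_2$ is not a counterexample without it, since in $K_2$ both (i) and (ii) fail, so the equivalence still holds there.
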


\proof 
(i) $\Rightarrow$ (ii): Let $uv$ be an edge of $G$. By (i), the set $X=\{u,v\}$ is not an outer MV set in $G$. As $u$ and $v$ are $X$-visible, there exists a vertex $z \in V(G) \setminus X$ such that a vertex from $X$ and $z$ are not $X$-visible. Without loss of generality, we may suppose that every shortest $u,z$-path contains $v$. Then, if $uvv'\dots z$ is a shortest $u,z$-path, then $uvv'$ is a convex $P_3$. Since this happens for every edge of $G$, Lemma~\ref{lem:critical}(ii) implies that $G$ is an edge-critical graph. 

(ii) $\Rightarrow$ (i):
Let $X$ be an outer MV set of $G$ and suppose on the contrary that $X$ contains adjacent vertices $u$ and $v$. Since $G$ is an edge-critical graph, by Lemma~\ref{lem:critical}(ii) there exists a vertex $w$ such that, without loss of generality, $uvw$ is a convex $P_3$. Then $u$ and $w$ are not $X$-visible, which is a contradiction no matter whether $w\in X$ or $w\notin X$.
\qed

Restricting to graphs of diameter two, Theorem~\ref{thm:outer-independent} further strengthens as follows. 

\begin{proposition}
\label{prop:outer-inde in diam 2}
If $G$ is a graph with $\diam(G) = 2$, then every independent set is an outer MV set. In addition, if $G$ is edge-critical, then outer MV sets coincide with independent sets and $\muo(G) = \alpha(G)$.
\end{proposition}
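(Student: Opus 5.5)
The first claim will follow directly from the definition of outer MV sets, using that every pair of vertices is at distance at most two. Let $X$ be an independent set of $G$ with $\diam(G)=2$. We have to check two kinds of pairs. First, any two vertices of $X$. Second, any pair $x\in X$, $y\in\overline{X}$. In both cases we exhibit a shortest path whose internal vertices avoid $X$.

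For the pairs we split by distance. If two vertices $u,v$ are adjacent, the edge $uv$ is a shortest path with no internal vertices, so they are trivially $X$-visible. If $d_G(u,v)=2$, every shortest $u,v$-path has the form $uwv$ with a single internal vertex $w$, and we must show $w\notin X$ can be chosen.
\begin{itemize}
\item If $u\in X$, then any common neighbour $w$ of $u$ and $v$ is adjacent to $u$.
\item Since $X$ is independent, such a $w$ cannot lie in $X$.
\item This argument needs only that at least one of $u,v$ lies in $X$, which holds for both kinds of required pairs.
\end{itemize}
Hence $X$ is an outer MV set. (Connectedness is implicit in $\diam(G)=2$.)

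For the second claim, assume in addition that $G$ is edge-critical. A graph of diameter two with at least one edge and not complete has order at least three, so Theorem~\ref{thm:outer-independent} applies. It says that every outer MV set is independent. Combined with the first part, outer MV sets and independent sets coincide. Taking maximum cardinalities then gives $\muo(G)=\alpha(G)$.

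The only delicate point is making sure the order condition of Theorem~\ref{thm:outer-independent} holds. This is immediate because diameter two forces at least three vertices. Alternatively, we could argue directly, as in the (ii)$\Rightarrow$(i) part of that proof, via Lemma~\ref{lem:critical}(ii): an edge $uv$ inside $X$ lies on a convex $P_3$ $uvw$, and then $u,w$ are not $X$-visible. Either way, nothing substantial remains.
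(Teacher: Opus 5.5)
Your proof is correct and follows the paper's argument. Independence of $X$ forces the middle vertex of any shortest path of length two that starts at a vertex of $X$ to lie outside $X$, and the second part is Theorem~\ref{thm:outer-independent}; your explicit check that diameter two guarantees connectivity and order at least three, which that theorem needs, is a detail the paper leaves implicit.
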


\proof
Let $S$ be an independent set of a graph $G$, where $\diam(G) = 2$. Then every two vertices of $S$ are clearly $S$-visible. Moreover, if $x\in S$ and $y\notin S$, then either $xy\in E(G)$ or $d_G(x,y) = 2$. In any case, since $S$ is independent, $x$ and $y$ are also $S$-visible. 

Assume next that $G$ is also edge-critical. Then by Theorem~\ref{thm:outer-independent}, every outer MV set is independent and we are done. 
\qed

Note that the Wagner graph $M_8$ from Fig.~\ref{fig:C8} is an example of a diameter two graph which is not edge-critical and hence not every outer MV set of $M_8$ is independent.  

\subsection{Edge-critical graphs}

In view of Theorem~\ref{thm:outer-independent} and Proposition~\ref{prop:outer-inde in diam 2}, edge-critical graphs deserve a closer attention. 

Examples of edge-critical graphs include graphs $G$ with $g(G)\ge 5$ and trees of order at least three. In addition, if $G$ is an edge-critical graph, then every graph obtained from $G$ by subdividing every edge of $G$ arbitrary number of times (possibly zero) is also an edge-critical graph. Among the Cartesian product graphs, edge-critical graphs are characterized as follows. 

\begin{proposition} 
\label{prop:cp}
If $G$ and $H$ are graphs, then $G\cp H$ is edge-critical if and only if both $G$ and $H$ are edge-critical. 
\end{proposition}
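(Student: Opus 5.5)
The plan is to reduce everything to Lemma~\ref{lem:critical}(ii), so that the statement becomes one about convex paths $P_3$ in $G\cp H$. I will assume $G$ and $H$ are connected, as in Lemma~\ref{lem:critical}. If a factor is trivial, say $H=K_1$, then $G\cp H\cong G$ and $K_1$ is vacuously edge-critical, so the statement holds. I will use the standard distance formula
$$d_{G\cp H}((g,h),(g',h')) = d_G(g,g') + d_H(h,h').$$
From it I derive two consequences. First, every layer $G^h$ and ${}^gH$ is convex. Indeed, a shortest path between two vertices with the same $H$-coordinate cannot use an edge that changes the $H$-coordinate, since any such step would have to be undone and would add at least $2$ to the length. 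Second, within a layer, distances are those of the factor.

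\textbf{Step 1: a convex $P_3$ lies inside a layer.} A $P_3$ in $G\cp H$ either has both edges in the same layer, or it is a ``corner'' $(u,h)(v,h)(v,h')$ with $uv\in E(G)$ and $hh'\in E(H)$. A corner is never convex, because $(u,h)(u,h')(v,h')$ is a second shortest path of length $2$ between its endpoints, and this path avoids the middle vertex. Hence every convex $P_3$ lies in a single $G$-layer or a single $H$-layer.

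\textbf{Step 2: a $P_3$ in a layer is convex in the product iff its projection is convex in the factor.} Take a $P_3$ $(u,h)(v,h)(w,h)$ inside $G^h$. By convexity of $G^h$, all shortest $(u,h),(w,h)$-paths stay in $G^h\cong G$, and $d_{G\cp H}((u,h),(w,h))=d_G(u,w)$. So this $P_3$ is convex in $G\cp H$ if and only if $uvw$ is convex in $G$. The same holds symmetrically for $H$-layers.

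\textbf{Step 3: conclude.} Every edge of $G\cp H$ has the form $(u,h)(v,h)$ with $uv\in E(G)$, or symmetrically an $H$-edge. By Steps 1 and 2, the edge $(u,h)(v,h)$ lies on a convex $P_3$ of $G\cp H$ exactly when $uv$ lies on a convex $P_3$ of $G$. By Lemma~\ref{lem:critical}(ii), $G\cp H$ is edge-critical if and only if every edge of $G$ and every edge of $H$ lies on a convex $P_3$, that is, if and only if both $G$ and $H$ are edge-critical.

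The main obstacle is Step 1, in particular the claim that corners are never convex; this is what rules out edges becoming critical only because of the product structure. Since the alternative path through $(u,h')$ is explicit, this should go through cleanly. The convexity of layers in Step 2 follows directly from the additive distance formula.
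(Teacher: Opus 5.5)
Your proof is correct and follows essentially the same route as the paper: reduce to Lemma~\ref{lem:critical}(ii), show that a convex $P_3$ of $G\cp H$ must lie inside a single layer, and use convexity of layers to transfer convex $P_3$'s between a layer and its factor. Your explicit argument that a ``corner'' $P_3$ is never convex supplies the detail the paper leaves implicit in the phrase ``by the structure of $G\cp H$''.
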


\proof
Assume first that $G\cp H$ is edge-critical. Let $e = (g,h)(g',h)\in E(G\cp H)$. Then by Lemma~\ref{lem:critical}(ii), $e$ lies in a convex $P_3$ of $G\cp H$. By the structure of $G\cp H$, this convex $P_3$ must lie in the layer $G^h$. This in turn implies that $gg'\in E(G)$ lies in a convex $P_3$ in $G$. So $G$ is an edge-critical graph, and by the commutativity of the Cartesian product, $H$ is an edge-critical graph as well. 

Conversely, assume that $G$ and $H$ are edge-critical graphs. Then using the fact that the layers of $G\cp H$ are convex subgraphs, and every edge of $G\cp H$ lies in some layer, we can conclude that $G\cp H$ is an edge-critical graph. 
\qed

Also among graphs of diameter two, the variety of edge-critical graphs remains large as illustrated by the following three families of such graphs.

\begin{itemize}
\item $K_{1,n}$, $n\ge 2$.
\item Let $\mathcal{G}$ be a family of graphs introduced in~\cite{henning-2015} as follows. Let $P$ be the Petersen graph, and let $G_7$ be the graph obtained from the cycle $C_3$ by adding a pendant edge to each vertex of the cycle and then adding a new vertex and joining it to the three degree-one vertices, see Fig.~\ref{fig:three graphs}. Then the family $\mathcal{G}$ 
\begin{enumerate}
\item[(i)] contains $C_5$, $G_7$, and $P$; and 
\item[(ii)] is closed under degree-two vertex duplication (see Fig.~\ref{fig:three graphs} again).
\end{enumerate}
It is straightforward to verify that every $G\in \mathcal{G}$ is an edge-critical graph with $\diam(G) = 2$. For instance, in $G_7$ we infer that every edge lies in a convex $P_3$ such that its middle vertex is not of degree $2$. This implies that after a degree-two vertex duplication is performed, the constructed graph remains edge-critical. 
\item The graph $H_{n,m}$, $n\ge 3$, $m\ge 1$, is defined as follows. Let $A$ and $B$ be the two bipartition sets of $K_{n,m}$, where $|A| = n$ and $|B| = m$. Then $H_{n,m}$ is obtained from the disjoint union of $K_{n,m}$ and $K_n$ by adding a matching between the vertices of $K_n$ and the vertices of $A$. See Fig.~\ref{fig:three graphs} where $H_{4,2}$ is shown and note that $H_{3,1} = G_7$. It is again straightforward to verify that every $H_{n,m}$ is an edge-critical graph of diameter two. Moreover, every graph obtained from $H_{n,1}$ by a degree-two vertex duplication is edge-critical and of diameter two.
\end{itemize}

\begin{figure}[ht!]
\begin{center}
\begin{tikzpicture}[scale=1,style=thick,x=0.8cm,y=0.8cm]
\def\vr{2pt}

\begin{scope}[xshift=0cm, yshift=0cm] 
\coordinate(x1) at (0,0);
\coordinate(x2) at (0,1);
\coordinate(x3) at (-1,1);
\coordinate(x4) at (1,1);
\coordinate(x5) at (0,2);
\coordinate(x6) at (-1,2.5);
\coordinate(x7) at (1,2.5);

\draw (x1) -- (x2)--(x5)--(x7)--(x4)--(x1);
\draw (x1) -- (x3)--(x6)--(x5);
\draw (x6) -- (x7);

\draw(x1)[fill=white] circle(\vr);
\draw(x2)[fill=white] circle(\vr);
\draw(x3)[fill=white] circle(\vr);
\draw(x4)[fill=white] circle(\vr);
\draw(x5)[fill=white] circle(\vr);
\draw(x6)[fill=white] circle(\vr);
\draw(x7)[fill=white] circle(\vr);

\node at (0.2,-1) {$G_7 \cong H_{3,1}$ };    
\end{scope}

\begin{scope}[xshift=4cm, yshift=0cm] 
\coordinate(x1) at (0,3);
\coordinate(x2) at (-2,0);
\coordinate(x3) at (2,0);
\coordinate(x4) at (0,1.3);
\coordinate(x5) at (-0.25,2);
\coordinate(x6) at (0.25,2);
\coordinate(x7) at (-1,1);
\coordinate(x8) at (-0.8,0.6);
\coordinate(x9) at (0.65,0.5);
\coordinate(x10) at (0.9,1);

\draw (x1) -- (x2)--(x3)--(x1);
\draw (x1)--(x5)--(x4)--(x6)--(x1);
\draw (x2) -- (x7)--(x4)--(x8)--(x2);
\draw (x3) -- (x9)--(x4)--(x10)--(x3);

\draw(x1)[fill=white] circle(\vr);
\draw(x2)[fill=white] circle(\vr);
\draw(x3)[fill=white] circle(\vr);
\draw(x4)[fill=white] circle(\vr);
\draw(x5)[fill=white] circle(\vr);
\draw(x6)[fill=white] circle(\vr);
\draw(x7)[fill=white] circle(\vr);
\draw(x8)[fill=white] circle(\vr);
\draw(x9)[fill=white] circle(\vr);
\draw(x10)[fill=white] circle(\vr);

\end{scope}

\begin{scope}[xshift=7.5cm, yshift=0cm] 
\coordinate(x1) at (0,3);
\coordinate(x2) at (1,3);
\coordinate(x3) at (2,3);
\coordinate(x4) at (3,3);
\coordinate(x5) at (0,1.5);
\coordinate(x6) at (1,1.5);
\coordinate(x7) at (2,1.5);
\coordinate(x8) at (3,1.5);
\coordinate(x9) at (1,0);
\coordinate(x10) at (2.5,0);

\draw (x1) -- (x2)--(x3)--(x4);
\draw (x1)--(x5)--(x9)--(x6)--(x2);
\draw (x9)--(x7)--(x3);
\draw (x9)--(x8)--(x4);
\draw (x10)--(x6);
\draw (x10)--(x5);
\draw (x10)--(x7);
\draw (x10)--(x8);
\draw (x1) .. controls (1,3.3).. (x3);
\draw (x2) .. controls (2,3.3).. (x4);
\draw (x1) .. controls (1,3.5) and (2,3.5).. (x4);

\draw(x1)[fill=white] circle(\vr);
\draw(x2)[fill=white] circle(\vr);
\draw(x3)[fill=white] circle(\vr);
\draw(x4)[fill=white] circle(\vr);
\draw(x5)[fill=white] circle(\vr);
\draw(x6)[fill=white] circle(\vr);
\draw(x7)[fill=white] circle(\vr);
\draw(x8)[fill=white] circle(\vr);
\draw(x9)[fill=white] circle(\vr);
\draw(x10)[fill=white] circle(\vr);

\node at (1.75,-1) {$H_{4,2}$ };    
\end{scope}

\end{tikzpicture}
\caption{From left to right: $G_7 \cong H_{3,1}$; the graph obtained from $G_7$ by duplicating once each of its degree two vertices; $H_{4,2}$}
\label{fig:three graphs}
\end{center}
\end{figure}
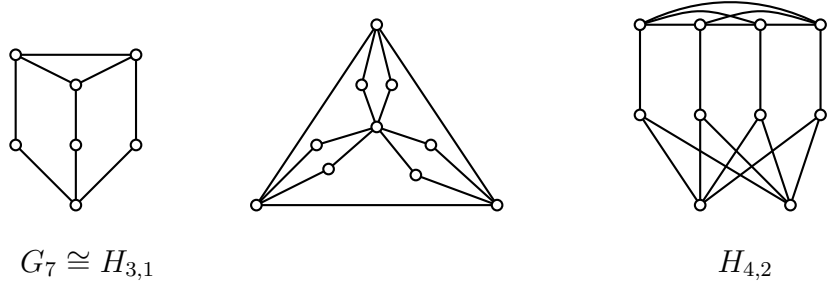

If $G$ is  an edge-critical graph with $\diam(G) = 2$ and has a leaf, then we can infer that $G$ must be a star. For triangle-free edge-critical graphs $G$ with $\diam(G) = 2$ and $\delta(G) \ge 2$ we have the following structural result. 

\begin{proposition}
\label{prop:triangle-free}
If $G$ is a triangle-free, edge-critical graph with $\diam(G) = 2$ and $\delta(G) \ge 2$, then every edge of $G$ lies in an induced $5$-cycle.
\end{proposition}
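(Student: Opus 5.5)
The plan is to fix an arbitrary edge $uv$ of $G$ and build an induced $5$-cycle through it explicitly. The ingredients are Lemma~\ref{lem:critical}(ii), the condition $\delta(G)\ge 2$, $\diam(G)=2$ and triangle-freeness.

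First, by Lemma~\ref{lem:critical}(ii) the edge $uv$ lies on a convex $P_3$. By symmetry between $u$ and $v$ we may assume this $P_3$ is $uvw$. Convexity means $d_G(u,w)=2$ and $N_G(u)\cap N_G(w)=\{v\}$, so $v$ is the unique common neighbour of $u$ and $w$.

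Second, since $\deg(u)\ge 2$, choose a neighbour $x\neq v$ of $u$. We record three facts about $x$:
\begin{itemize}
\item $x\neq w$, since $w$ is not adjacent to $u$;
\item $xv\notin E(G)$, since otherwise $uvx$ would be a triangle;
\item $xw\notin E(G)$, since otherwise $x$ would be a second common neighbour of $u$ and $w$.
\end{itemize}
Hence $d_G(x,w)=2$, using $\diam(G)=2$. Let $y$ be a common neighbour of $x$ and $w$. Then $y\neq u$ because $uw\notin E(G)$, and $y\neq v$ because $vx\notin E(G)$. So $u,v,w,y,x$ are five distinct vertices forming the cycle $uvwyxu$.

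Third, we check that this cycle is induced by ruling out its five chords:
\begin{itemize}
\item $uw$ is absent since $d_G(u,w)=2$;
\item $vx$ and $xw$ are absent as shown above;
\item $vy$ is absent, since otherwise $vwy$ would be a triangle;
\item $uy$ is absent, since otherwise $y$ would be a common neighbour of $u$ and $w$ different from $v$, contradicting convexity.
\end{itemize}
Thus $uv$ lies in an induced $C_5$, and since $uv$ was arbitrary, this proves the statement.

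The argument is short, and the only point requiring care is the choice of $x$ and $y$. One must check that each potential chord is excluded by exactly one of the hypotheses: convexity of $uvw$ (for $uw$, $xw$, $uy$), triangle-freeness (for $vx$, $vy$), $\delta(G)\ge 2$ (for the existence of $x$) and $\diam(G)=2$ (for the existence of $y$).
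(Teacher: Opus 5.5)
Your proof is correct and follows essentially the same route as the paper: take a convex $P_3$ through the edge, use $\delta(G)\ge 2$ to extend it at the non-middle endpoint of the edge, and use $\diam(G)=2$ to close it into a $C_5$ whose chords are excluded by convexity and triangle-freeness. The only difference is notation: your $w,v,u,x,y$ play the roles of the paper's $x',x,y,y',z$, and you check the chords directly rather than first noting that $x'xyy'$ is an induced $P_4$.
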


\proof
Let $e=xy\in E(G)$. Since $G$ is edge-critical, by Lemma~\ref{lem:critical}(ii) we may without loss of generality assume that there exists a neighbor $x'$ of $x$ such that the vertices $x', x, y$ induce a convex $P_3$. Since $\delta(G)\ge 2$, the vertex $y$ has a neighbor $y'\ne x$. Since $G$ is triangle-free, $y'\ne x'$. Thus the vertices $x', x, y, y'$ are pairwise different and lie on a $P_4$. Moreover, this $P_4$ is induced since the presence of the edge $x'y'$ would imply that $x', x, y$ would not induce a convex $P_3$. It follows, having in mind that $\diam(G) = 2$, that there exists a new vertex $z$ such that $x'z, zy'\in E(G)$. The triangle-freeness of $G$ also gives $zx\notin E(G)$ and $zy\notin E(G)$. We can conclude that $xx'zy'yx$ is an induced $C_5$ containing the edge $e$.
\qed

The graph obtained from $C_5$ by duplicating one of its vertices is a graph that fulfills all the assumptions of Proposition~\ref{prop:triangle-free}, yet it contains no convex $C_5$. Therefore, we cannot strengthen Proposition~\ref{prop:triangle-free} by stating that every edge of $G$ lies in a convex $C_5$.     

The following example shows that triangle-freeness cannot be omitted in Proposition~\ref{prop:triangle-free}. We take a $6$-cycle $v_1\dots v_6v_1$ and add the edges $v_1v_3$ and $v_1v_5$. For the graph $G$ obtained this way, $\diam(G)=2$ and $\delta(G)\geq2$. We can also identify convex $P_3$-subgraphs in $G$ that together cover the entire edge set. Since $N[v_6] \cap N[v_2]= \{v_1\}$, the path $v_6v_1v_2$ is convex. Similarly, $v_2v_3v_4$, $v_4v_5v_6$, $v_5v_1v_2$, and $v_3v_1v_6$ are all convex $P_3$-subgraphs. Therefore $G$ is edge-critical. On the other hand, there is no induced $5$-cycle in $G$.

\section{Total/dual mutual-visibility and independence}
\label{sec:total/dual}

In this section we consider connections between independent sets, and total MV sets and dual MV sets. 

The graph $K_4-e$ is known as the {\em diamond} graph. We say that a connected graph $G$ is a {\em dense diamond graph}, if any two vertices $u,v\in V(G)$ with $d_G(u,v) = 2$ lie on a common diamond. 

\begin{theorem}
\label{thm:dense-diamond-independent}
If $G$ is a connected graph, then the following statements are equivalent. 
\begin{enumerate}
\item[(i)] Every independent set of $G$ is a total MV set. 
\item[(ii)] Every independent set of $G$ is a dual MV set. 
\item[(iii)] $G$ is a dense diamond graph. 
\end{enumerate}
\end{theorem}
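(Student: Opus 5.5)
The plan is to prove the cycle of implications (iii) $\Rightarrow$ (i) $\Rightarrow$ (ii) $\Rightarrow$ (iii), relying on Theorem~\ref{thm:distance two}. First we reformulate the dense diamond condition. Let $u,v$ be vertices with $d_G(u,v)=2$. Then $u$ and $v$ are non-adjacent. In $K_4-e$ the only non-adjacent pair is the pair of degree-two vertices, so any diamond containing both $u$ and $v$ has them as that pair. Hence $u,v$ lie on a common diamond if and only if $N_G(u)\cap N_G(v)$ contains two adjacent vertices, that is, if and only if $N_G(u)\cap N_G(v)$ is not an independent set. So $G$ is a dense diamond graph exactly when, for every pair at distance two, the common neighbourhood is not independent.

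For (iii) $\Rightarrow$ (i), let $X$ be an independent set and let $d_G(u,v)=2$. By (iii) and the reformulation, $N_G(u)\cap N_G(v)$ contains adjacent vertices $a,b$. Since $X$ is independent, it cannot contain both $a$ and $b$. Therefore $N_G(u)\cap N_G(v)\not\subseteq X$, and Theorem~\ref{thm:distance two}(iii) shows that $X$ is a total MV set.

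The implication (i) $\Rightarrow$ (ii) is immediate from the definitions. A total MV set makes every pair of vertices of $G$ $X$-visible. In particular it makes pairs inside $X$ and pairs inside $\overline{X}$ $X$-visible, so it is a dual MV set.

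For (ii) $\Rightarrow$ (iii) we argue by contraposition, and this is the only step with any content. Suppose $G$ is not a dense diamond graph. Then there are $u,v$ with $d_G(u,v)=2$ whose common neighbourhood $C=N_G(u)\cap N_G(v)$ is independent; $C$ is nonempty because $d_G(u,v)=2$. Take $X=C$, which is independent. Neither $u$ nor $v$ lies in $X$, since a vertex is not its own neighbour, so $u,v\in\overline{X}$. Every shortest $u,v$-path has the form $uwv$ with $w\in C=X$, so every such path has an internal vertex in $X$. Thus $u$ and $v$ are not $X$-visible, and $X$ is an independent set that is not a dual MV set, contradicting (ii). The main obstacle is simply choosing the witness set $X=N_G(u)\cap N_G(v)$ and observing that $u,v\notin X$; everything else is bookkeeping.
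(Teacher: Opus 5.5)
Your proof is correct and essentially the same as the paper's. It uses the same chain of implications, the same witness set $N_G(u)\cap N_G(v)$ for (ii) $\Rightarrow$ (iii) (you phrase it as a contrapositive), and the same appeal to Theorem~\ref{thm:distance two} for (iii) $\Rightarrow$ (i). Your opening reformulation of the dense diamond condition just makes explicit a step the paper takes for granted.
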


\proof
(i) $\Rightarrow$ (ii): This holds since total MV sets are dual MV sets.

(ii) $\Rightarrow$ (iii): Let $x,y\in V(G)$ be vertices with $d_G(x,y) = 2$. Then $W= N_G(x)\cap N_G(y)$ is not empty. If there exist adjacent vertices $a,b\in W$, then $x,y,a,b$ induce a diamond. Otherwise, $W$ induces an independent set. By the condition (ii), $W$ must be a dual MV set. But this is not true because $x$ and $y$ are not $W$-visible. 

(iii) $\Rightarrow$ (i): Let $G$ be a dense diamond graph and let $X$ be an independent set of $G$. We need to show that $X$ is a total MV set. By Theorem~\ref{thm:distance two} it suffices to demonstrate that if $x,y\in V(G)$ with $d_G(x,y) = 2$, then $x$ and $y$ are $X$-visible. By (iii), there exists a subgraph $K_4-e$ which contains $x$ and $y$, let $a$ and $b$ be the common neighbors of $x$ and $y$ in this diamond. As $ab\in E(G)$ we have $|X\cap \{a,b\}| \le 1$. It follows that $x$ and $y$ are $X$-visible. 
\qed

Consider the simple example of $C_4$ to see that the equivalence in Theorem~\ref{thm:dense-diamond-independent} does not remain true if we require that every independent set is a MV set or that it is an outer MV set.  
\medskip

We can also characterize graphs with the reverse property for total MV sets. A {\em convex diamond} $v_1v_2v_3v_4$ is a diamond where $v_1$ and $v_3$ are nonadjacent vertices and $N(v_1) \cap N(v_3) = \{v_2,v_4\}.$ With the same notation, $v_2v_4$ is the \emph{middle edge of the diamond}.

\begin{theorem}
\label{thm:total-only-independent}
The following statements are equivalent for every connected graph $G$. 
\begin{enumerate}
\item[(i)] Every total MV set of $G$ is independent.
\item[(ii)] For every edge $uv \in E(G)$ one of the following properties is true:
  \begin{itemize}
   \item[(a)] $u$ or $v$ is the middle vertex of a convex $P_3$; 
   \item[(b)] $uv$ is the middle edge of a convex diamond.
   \end{itemize}
\end{enumerate}
\end{theorem}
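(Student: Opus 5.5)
The whole argument reduces to Theorem~\ref{thm:distance two}(iii): a set $X$ is a total MV set if and only if every pair $x,y$ with $d_G(x,y)=2$ satisfies $N(x)\cap N(y)\not\subseteq X$. Since every superset of a non-independent set is non-independent, and every subset of a total MV set is again a total MV set (condition (iii) is monotone decreasing in $X$), statement (i) is equivalent to the claim that \emph{no edge} $\{u,v\}$ forms a total MV set. So the plan is to show that, for an edge $uv$, the set $X=\{u,v\}$ fails to be a total MV set exactly when (a) or (b) holds.

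For (ii) $\Rightarrow$ (i), I take a total MV set $X$ that contains an edge $uv$ and derive a contradiction. Under (a), say $u$ is the middle vertex of a convex $P_3$ $xuy$. Then $d(x,y)=2$ and $N(x)\cap N(y)=\{u\}\subseteq X$, which violates Theorem~\ref{thm:distance two}(iii). Under (b), $uv$ is the middle edge of a convex diamond $xuyv$. Then $d(x,y)=2$ and $N(x)\cap N(y)=\{u,v\}\subseteq X$, again a contradiction. Hence $X$ is independent.

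For (i) $\Rightarrow$ (ii), fix an edge $uv$ and set $X=\{u,v\}$. Because $X$ is not independent, (i) says $X$ is not a total MV set. Theorem~\ref{thm:distance two} then gives vertices $x,y$ with $d(x,y)=2$ and $\emptyset\ne N(x)\cap N(y)\subseteq\{u,v\}$; note that $x,y\notin N(x)\cap N(y)$ trivially. If $N(x)\cap N(y)$ is a single vertex, say $u$, then $xuy$ is a convex $P_3$ (its endpoints are at distance $2$ and $u$ is their only common neighbour), and (a) holds. If $N(x)\cap N(y)=\{u,v\}$, then $x,u,y,v$ are distinct because $x,y\notin N(x)\cap N(y)$. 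Since $xy\notin E(G)$ and $uv\in E(G)$, these four vertices induce a diamond, and it is convex with middle edge $uv$, so (b) holds.

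The main point to check is the reduction step: that hereditary closure lets us test only two-element sets. This is immediate from condition (iii) of Theorem~\ref{thm:distance two}, since shrinking $X$ can only make $N(x)\cap N(y)\not\subseteq X$ easier to satisfy. Beyond that, the only thing to verify is that the convexity definitions used for $P_3$ and for the diamond match the common-neighbourhood conditions, which follows directly from the definitions given in the paper.
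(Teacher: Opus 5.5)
Your proof is correct. It differs from the paper's in the direction (i) $\Rightarrow$ (ii), and your version is considerably shorter.

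\textbf{The paper's route.} It does not invoke Theorem~\ref{thm:distance two} in that direction. It starts from an arbitrary pair $x,y$ that is not $\{u,v\}$-visible, possibly far apart, and assumes that neither $u$ nor $v$ is the middle vertex of a convex $P_3$. It then uses this assumption to reroute shortest paths around $u$ and $v$, via a case analysis:
\begin{itemize}
\item one of $x,y$ lies in $\{u,v\}$;
\item some shortest $x,y$-path uses the edge $uv$;
\item a shortest path meets exactly one of $u,v$.
\end{itemize}
The first two cases lead to contradictions. In the last case, the two path-neighbours $x',y'$ of $u$ must satisfy $N(x')\cap N(y')=\{u,v\}$, which gives the convex diamond.

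\textbf{Your route.} You let Theorem~\ref{thm:distance two}(iii) localize the failure at once to a pair at distance two with $\emptyset\neq N(x)\cap N(y)\subseteq\{u,v\}$. The cases $|N(x)\cap N(y)|=1$ and $|N(x)\cap N(y)|=2$ are then exactly (a) and (b).

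\textbf{What each buys.} Your argument is brief. It also makes explicit that the theorem is really edge-by-edge: $\{u,v\}$ is a total MV set exactly when $uv$ satisfies neither (a) nor (b). The paper's argument is self-contained, working only from the definition of $X$-visibility. In effect it re-derives the special case of the distance-two reduction that it needs, at the cost of the case analysis.

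\textbf{Minor remarks.} The direction (ii) $\Rightarrow$ (i) is essentially the same in both proofs: the endpoints of the convex $P_3$, or the two non-adjacent vertices of the convex diamond, are not $X$-visible. Your opening heredity reduction is correct but not actually used. Direction (i) $\Rightarrow$ (ii) only needs that the non-independent set $\{u,v\}$ is not a total MV set, and (ii) $\Rightarrow$ (i) works directly with an arbitrary $X\supseteq\{u,v\}$.
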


\proof
(i) $\Rightarrow$ (ii): Assume that every total MV set of $G$ is independent and consider an arbitrary edge $uv \in E(G)$. If $u$ or $v$ is the middle vertex of a convex $P_3$, then there is nothing to prove, hence assume that neither $u$ nor $v$ is the middle vertex of a convex $P_3$. We need to prove that $uv$ is the middle edge of a convex diamond.

As we have assumed that total MV sets are independent, $X = \{u,v\}$ is not a total MV set. Hence there exist two vertices $x$ and $y$ which are not $X$-visible. Consider the following two cases. 

\medskip\noindent
{\bf Case 1}: $\{x,y\} \cap \{u,v\} \ne \emptyset$.\\
Assume without loss of generality that $x=u$. As $u$ and $v$ are $X$-visible, we then have $y\ne v$. Consider an arbitrary shortest $y,x$-path $Q$. Since $x=u$ and $y$ are not $X$-visible, $v$ lies on $Q$. Let $y'$ be the neighbor of $v$ on $Q$ different from $x$; it is possible that $y'=y$. As $Q$ is a shortest path, we clearly have $y'x\notin E(G)$. Therefore, since $v$ is not the middle vertex of a convex $P_3$, there exists a path $y'v'x$, where $v'\ne v$. But this means that $y$ and $x$ are $X$-visible, a contradiction. Hence Case 1 never happens. 

\medskip\noindent
{\bf Case 2}: $\{x,y\} \cap \{u,v\} = \emptyset$.\\
Let $Q$ be an arbitrary shortest $x,y$-path. In the first subcase suppose that $Q$ contains the edge $uv$, and assume without loss of generality that $d_G(x,u) < d_G(x,v)$. Let $x'$ be the neighbor of $u$ on $Q$ different from $v$, and let $y'$ be the neighbor of $v$ on $Q$ different from $u$. Since $u$ is not the middle vertex of a convex $P_3$, there exists a vertex $u'\ne u$ such that $x'u', u'v\in E(G)$. Since $Q$ is a shortest path, $u'y'\notin E(G)$. Similarly, because $v$ is not the middle vertex of a convex $P_3$, considering the subpath $u'vy'$ we infer that there exists a vertex $v'\ne v$ such that $u'v', v'y'\in E(G)$. But then replacing the subpath $x'uvy'$ of $Q$ by the subpath $x'u'v'y'$ yields that $x$ and $y$ are $X$-visible. Hence, the first subcase cannot happen, that is, $Q$ contains exactly one vertex from $\{u,v\}$, say $u$. Let $x'$ and $y'$ be the neighbors of $u$ on $Q$, such that $ d_G(x,x') < d_G(x,u) < d_G(x,y')$. Now, as $u$ is not the middle vertex of a convex $P_3$, there exists a path of length $2$ on vertices $x', u', y'$, where $u'\ne u$. If $u'\ne v$, then $x$ and $y$ are $X$-visible, a contradiction. Hence, it is necessary that $u'=v$. We can conclude that the vertices $x',u,v,y'$ induce a convex diamond as required.     

\smallskip
(ii) $\Rightarrow$ (i): Assume now that every edge  of $G$ satisfies (a) or (b). Let $X$ be an arbitrary total MV set of $G$ and suppose that there are vertices $x,y\in X$ such that $xy\in E(G)$. Then $xy$ satisfies (a) or (b). In the first subcase suppose that $xy$ satisfies (a). Let, without loss of generality, $xyz$ be a convex $P_3$. But then $x$ and $z$ are not $X$-visible. In the second subcase suppose that $xy$ satisfies (b). Consider a convex diamond containing $xy$ as the middle edge, and let $x'$ and $y'$ be the other two vertices of it. But now $x'$ and $y'$ are not $X$-visible. We can conclude that no two vertices of $X$ can be adjacent. 
\qed

Clearly, every graph with $\mut(G) = 0$ fulfills (i) in Theorem~\ref{thm:total-only-independent}, but in such graphs no non-trivial independent set is a total MV set. For examples of such graphs see~\cite{tian-2024}. 

For graphs that apply to Theorem~\ref{thm:total-only-independent} and contain non-trivial total MV sets, consider a fullerene graph $F$, that is, a cubic plane graph in which all faces are pentagons or hexagons. As is well known, $F$ contains exactly $12$ pentagonal faces. Let now $\widehat{F}$ be the graph obtained from $F$ by inserting one vertex, called a {\em central vertex} of $\widehat{F}$, into each of the faces of $F$ and connect it to all the vertices of the face. Let $x$ be an arbitrary vertex of $F$. Then consider a neighbor $y$ of $x$ on a face of $F$, and the central vertex of another face containing $x$ but not $y$, to see that $x$ is the central vertex of a convex $P_3$ in $\widehat{F}$. Hence no total MV set of $\widehat{F}$ contains such a vertex. The same conclusion can be deduced for each central vertex which lies inside a hexagonal face. However, there is no such restriction on the central vertices of pentagons. Moreover, applying Theorem~\ref{thm:distance two} we can deduce that these vertices form a total MV set. We have thus deduced the following result that could be of independent interest. 

\begin{proposition}
\label{prop:fullerenes}
If $F$ is a fullerene graph, then $\mut(\widehat{F}) = 12$. 
\end{proposition}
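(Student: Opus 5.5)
The proof has two halves: an upper bound $\mut(\widehat{F})\le 12$ and a matching lower bound $\mut(\widehat{F})\ge 12$. The vertices of $\widehat{F}$ split into three classes: the original vertices of $F$, the central vertices of hexagons, and the central vertices of pentagons. We will show that no total MV set contains a vertex from either of the first two classes. Since $F$ has exactly $12$ pentagonal faces, this gives $\mut(\widehat{F})\le 12$. We will then show, via Theorem~\ref{thm:distance two}, that the set $P$ of the $12$ pentagon centres is a total MV set, which gives the reverse inequality.

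\emph{Upper bound.} If $w$ is the middle vertex of a convex $P_3$, say $awb$, then $a$ and $b$ are not $X$-visible for any $X$ containing $w$. So it suffices to exhibit such a path for every vertex outside $P$.

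For $x\in V(F)$, let $f_1,f_2,f_3$ be the three faces at $x$, with central vertices $c_1,c_2,c_3$. Let $y$ be the neighbour of $x$ shared by $f_1,f_2$, so $y$ does not lie on $f_3$. We check $N(y)\cap N(c_3)=\{x\}$. The neighbours of $c_3$ are the vertices of $f_3$. The neighbours of $y$ are its three $F$-neighbours and the centres of the three faces at $y$. A common neighbour $z\ne x$ would be an $F$-neighbour of $y$ lying on $f_3$. In $\widehat{F}$ the face $f_3$ is not $f_1$ or $f_2$, and $y\notin f_3$. Using that $F$ is $3$-connected and cubic with faces of length at least $5$ (so two faces meet in at most an edge and there are no short cycles), such a $z$ would create a cycle $x y z \dots$ of length at most $4$ or force $f_3$ and $f_1$ to share two non-consecutive vertices, which is impossible. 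Hence $y x c_3$ is a convex $P_3$.

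For the centre $c$ of a hexagon $v_1\dots v_6$, take the antipodal vertices $v_1,v_4$. They are non-adjacent in $F$, since the hexagon is a face of a $3$-connected plane graph and so an induced cycle. We check $N(v_1)\cap N(v_4)=\{c\}$. The only possible common neighbours are a vertex of $F$ or the centre of another face containing both $v_1$ and $v_4$. The first is excluded by girth $5$ together with the face structure. The second is excluded because two faces of a fullerene intersect in at most one edge.

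\emph{Lower bound.} By Theorem~\ref{thm:distance two}(iii) we must show $N(u)\cap N(v)\not\subseteq P$ whenever $d(u,v)=2$. If $u$ and $v$ lie on a common face, then they have a common neighbour in $V(F)$ or a common neighbour that is a hexagon centre, unless the only witnesses are pentagon centres. Two vertices at distance $2$ along a face boundary always have the middle boundary vertex, which is in $V(F)$, as a common neighbour. Two non-adjacent vertices on the same pentagon are at distance $2$ along the boundary. Two central vertices at distance $2$ share a vertex of $F$ in their faces, so they have a common neighbour in $V(F)$. If $u\in V(F)$ and $v$ is a centre, then a path $u\,w\,v$ with $w$ a pentagon centre is impossible, since centres are pairwise non-adjacent. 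Hence every common neighbour of such a pair is an original vertex. In all cases some common neighbour lies outside $P$.

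The main obstacle is the case analysis in the upper bound, which rules out extra common neighbours. It relies on the structural facts that fullerenes are $3$-connected, have girth $5$, and that any two faces share at most one edge. We will quote these facts explicitly.
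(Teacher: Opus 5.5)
Your proposal is correct and follows the same route as the paper: every original vertex and every hexagon centre is the middle vertex of a convex $P_3$ (your paths $y\,x\,c_3$ and $v_1\,c\,v_4$), and the twelve pentagon centres form a total MV set by Theorem~\ref{thm:distance two}. One small precision: in the hexagon case, a common $F$-neighbour $w$ of the antipodal vertices $v_1,v_4$ would only create $5$-cycles, so girth does not exclude it; what excludes it is the face-intersection property, because the face at $w$ containing the path $v_1\,w\,v_4$ would meet the hexagon in two non-adjacent vertices.
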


If we replace total mutual-visibility with dual mutual-visibility in Theorem~\ref{thm:total-only-independent}, the equivalence does not hold. However, since every total MV set is a dual MV set, one direction of Theorem~\ref{thm:total-only-independent} remains true.

\begin{corollary}
    \label{cor:dual-only-independent}
  If every dual MV set of $G$ is independent, at least one of the following statements is true for every edge $uv \in E(G)$.
  \begin{itemize}
   \item[(a)] $u$ or $v$ is the middle vertex of a convex $P_3$; 
   \item[(b)] $uv$ is the middle edge of a convex diamond.
   \end{itemize}  
\end{corollary}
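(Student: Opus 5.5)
The plan is to derive the corollary directly from Theorem~\ref{thm:total-only-independent} by an inclusion argument. Every total MV set is a dual MV set: if all pairs of vertices of $G$ are $X$-visible, then in particular all pairs inside $X$ and all pairs inside $\overline{X}$ are. So if every dual MV set of $G$ is independent, every total MV set of $G$ is independent as well. This gives statement (i) of Theorem~\ref{thm:total-only-independent}. The direction (i) $\Rightarrow$ (ii) of that theorem then yields, for every edge $uv$, property (a) or property (b), which is exactly the claim.

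The main point to check is that the proof of (i) $\Rightarrow$ (ii) in Theorem~\ref{thm:total-only-independent} uses only the hypothesis that the specific two-element set $X=\{u,v\}$ is not a total MV set. I will verify that this is so, so that the implication transfers without any change.

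The only subtlety is the connectivity assumption in Theorem~\ref{thm:total-only-independent}. I will read the corollary in the same setting of connected graphs $G$, as in the surrounding section. If one wants to avoid that assumption, the argument can be run componentwise, since distances, and hence visibility, only involve pairs inside a component. No other obstacle is expected.

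To be self-contained, I could alternatively note the following. If $uv$ is an edge and $\{u,v\}$ is a dual MV set, then it is a non-independent dual MV set, which the hypothesis forbids. So some pair of vertices fails $\{u,v\}$-visibility, and the case analysis of Theorem~\ref{thm:total-only-independent} applies verbatim. However, the one-line reduction via ``total $\Rightarrow$ dual'' is the intended route.
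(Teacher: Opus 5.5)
Your argument matches the paper's: every total MV set is a dual MV set, so the hypothesis gives condition (i) of Theorem~\ref{thm:total-only-independent}, and the implication (i)~$\Rightarrow$~(ii) of that theorem yields the claim directly, with no need to re-examine the theorem's proof. Drop your aside about removing connectedness componentwise, because it is false: two vertices in different components are never $X$-visible, so in $G = K_2 \cup 2K_1$ each of $X$ and $\overline{X}$ would have to lie inside a single component, no dual MV set exists, the hypothesis holds vacuously, and yet the edge of $K_2$ satisfies neither (a) nor (b).
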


\subsection{Dense diamond graphs}
\label{sec:dense-diamond}

In view of the above results, it is worth having a closer look at dense diamond graphs. We first demonstrate that the family of dense diamond graphs is large. 

\begin{proposition}
\label{prop:joins-diamond-dense}
If $G$ and $H$ are graphs, then $G\vee H$ is a dense diamond graph if and only if one of the following conditions holds. 
\begin{enumerate}
\item[(i)] $m(G)\ge 1$, $m(H)\ge 1$.
\item[(ii)] $m(G) = 0$, $\diam(H)\le 2$, and $m(H)\ge 1$.
\item[(iii)] $m(H) = 0$, $\diam(G)\le 2$, and $m(G) \ge 1$.
\end{enumerate}
\end{proposition}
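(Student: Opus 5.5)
Throughout, $G$ and $H$ are tacitly nonempty, so $G\vee H$ is connected of diameter at most $2$. The plan is to classify the pairs $u,v$ at distance two in $G\vee H$ and check when each lies on a common diamond. Two vertices at distance two in $G\vee H$ must both lie in $G$ or both in $H$, since every $G$-vertex is adjacent to every $H$-vertex. Moreover, any two such vertices $u,v\in V(G)$ are nonadjacent, and their common neighbourhood in $G\vee H$ is $(N_G(u)\cap N_G(v))\cup V(H)$. By definition, $u,v$ lie on a common diamond exactly when this common neighbourhood contains an edge. So the whole proof reduces to the following statement: for every pair of nonadjacent vertices $u,v$ of $G$, the set $(N_G(u)\cap N_G(v))\cup V(H)$ spans an edge; and symmetrically with $G$ and $H$ swapped.

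For sufficiency, I would treat the three cases separately. In case (i), $H$ has an edge, so $V(H)$ already spans an edge and every distance-two pair inside $G$ is fine. By symmetry the same holds for pairs inside $H$, using an edge of $G$. In case (ii), $G$ is edgeless and $m(H)\ge 1$, so pairs inside $G$ are handled by an edge of $H$ as before. For a nonadjacent pair $u,v$ in $H$, the condition $\diam(H)\le 2$ (which presumes $H$ connected) gives a common neighbour $w$ in $H$. Then $w$ together with any vertex $g\in V(G)$ gives the edge $wg$ inside the common neighbourhood, so $u,v,w,g$ form a diamond. Case (iii) is symmetric.

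For necessity, assume $G\vee H$ is a dense diamond graph and suppose none of (i)--(iii) holds. Then either $m(G)=m(H)=0$, or exactly one of $G,H$ has an edge and the other fails $\diam\le 2$. If both are edgeless, $G\vee H$ is complete bipartite. Any distance-two pair in it (which exists unless the graph is $K_{1,1}$, i.e., both sides are single vertices) has an independent common neighbourhood, so it lies on no diamond, a contradiction. If $m(G)=0$, $m(H)\ge 1$ and $\diam(H)>2$, pick nonadjacent $u,v\in V(H)$ with no common neighbour in $H$; such a pair exists whether $H$ is disconnected or has diameter at least $3$. Their common neighbourhood is then exactly $V(G)$, which is independent, so there is no diamond on $u,v$, a contradiction. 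The remaining case is symmetric.

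The main obstacle is boundary behaviour rather than any real difficulty. The statement as written does not literally hold when $G\vee H$ has no distance-two pairs at all, for example when both $G$ and $H$ are complete, or when $G\vee H=K_2$: such graphs are vacuously dense diamond graphs, yet may violate (i)--(iii). I would therefore state explicitly the convention needed, namely that $G\vee H$ is not complete, or else interpret $\diam(H)\le 2$ for an edgeless $H$ suitably. I would then verify that the case analysis above is consistent with that convention.
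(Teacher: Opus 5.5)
Your proof is correct and follows essentially the same route as the paper. Both use the same reduction to nonadjacent pairs inside one side of the join. Both use the same diamond witnesses for sufficiency: an edge $hh'$ of $H$, or a middle vertex $w$ of $H$ together with any $g\in V(G)$. For necessity, both pick the same pairs, whose common neighbourhood is the independent set $V(G)$.

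Your boundary caveat is genuine, and the paper in fact misses it: its step ``$m(G)=m(H)=0$, so there is no diamond'' fails for $K_1\vee K_1=K_2$, which has no distance-two pairs. However, $K_2$ is the only exception, since $K_a\vee K_b$ with $a+b\ge 3$ satisfies (i), (ii) or (iii). So excluding $K_2$ suffices, and requiring that the join is not complete is more than needed. Reinterpreting the diameter condition would not help, since (ii) and (iii) demand an edge.

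One wording slip: in your necessity case it is the graph \emph{with} an edge that violates $\diam\le 2$, as your argument correctly uses, not ``the other'' one.
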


\proof
Set $X = G\vee H$. 

Consider arbitrary vertices $x,y\in V(X)$ with $d_X(x,y) = 2$. Assume first that $m(G)\ge 1$ and $m(H)\ge 1$, and let $gg'\in E(G)$ and $hh'\in E(H)$. If $x,y\in V(G)$, then $x,y,h,h'$ lie in a diamond, and if $x,y\in V(H)$, then $x,y,g,g'$ lie in a diamond. Assume second that $m(G) = 0$, $\diam(H)\le 2$, and $m(H)\ge 1$. If $x,y\in V(G)$, then consider an arbitrary edge $hh'$ of $H$ to see that $x,y,h,h'$ lie in a diamond. If $x,y\in V(H)$, then let $xzy$ be a shortest $x,y$-path in $H$. Then $x,z,y,g$ lie in a diamond, where $g$ is an arbitrary vertex of $G$. The case $m(H) = 0$, $\diam(G)\le 2$, $m(G) \ge 1$ is symmetric to the last case. 

To prove the converse, suppose that none of (i), (ii), and (iii) holds. If $m(G) = m(H) = 0$, then there is no diamond in $G\vee H$. By the symmetry, if suffices to consider the case when $m(G) = 0$ and $\diam(H)\ge 3$. If $H$ is not connected (that is, $\diam(H) = \infty$), then vertices $h$ and $h'$ which lie in different components of $H$ do not lie in a diamond in $X$. Let next $H$ be connected and let $h$ and $h'$ be vertices of $H$ with $d_H(h,h') = 3$. Then $h$ and $h'$ do not lie in a diamond of $X$ because all their common neighbors lie in $V(G)$ and are therefore independent. In summary, if none of (i), (ii), and (iii) holds, then $X$ is not a dense diamond graph. 
\qed

Another family of dense diamond graphs is the family of strong products $G\boxtimes K_n$, $n\ge 2$, where $G$ is an arbitrary connected graph. Recall that $\diam(X\boxtimes Y) = \max\{ \diam(X), \diam(Y)\}$, cf.~\cite[Proposition 5.4]{hik-2011}. Hence the graphs $G\boxtimes K_n$ not only provide dense diamond graphs of arbitrary diameter, but also diameter two graphs which are not joins. The latter fact follows from the following. 

\begin{proposition}
\label{prop:strong-join}
If $G$ is a non-trivial graph and $n\ge 2$, then $G\boxtimes K_n$ is a join graph if and only if $G$ is a join graph. 
\end{proposition}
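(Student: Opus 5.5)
The plan is to pass to complements, using the standard fact that a graph $X$ on at least two vertices is a join graph if and only if its complement $\overline{X}$ is disconnected. Indeed, if $X = A\vee B$ then no edge of $\overline{X}$ joins $V(A)$ and $V(B)$. Conversely, if $\overline{X}$ has a component $C$ with $V(C)\neq V(X)$, then every vertex of $C$ is adjacent in $X$ to every vertex outside $C$, so $X = X[V(C)]\vee X[V(X)\setminus V(C)]$.

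\medskip\noindent
\textbf{Step 1: describe the complement of $G\boxtimes K_n$.} Two distinct vertices $(g,i)$ and $(g',j)$ of $G\boxtimes K_n$ are adjacent if and only if $g=g'$ or $gg'\in E(G)$. This is because in $K_n$ any two second coordinates are either equal or adjacent. Hence $(g,i)(g',j)\in E(\overline{G\boxtimes K_n})$ if and only if $g\ne g'$ and $gg'\in E(\overline{G})$. So $\overline{G\boxtimes K_n}$ is obtained from $\overline{G}$ by replacing each vertex $g$ with an independent set $\{g\}\times [n]$ and each edge $gg'$ of $\overline{G}$ with a complete bipartite graph between the corresponding independent sets.

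\medskip\noindent
\textbf{Step 2: relate the components.} Let $C$ be a component of $\overline{G}$. If $|V(C)|\ge 2$, then $V(C)\times[n]$ induces a connected subgraph of $\overline{G\boxtimes K_n}$. To see this, pick any $c\in V(C)$ and a neighbour $c'$ of $c$ in $C$. The copies $(c,i)$ and $(c,j)$ are linked through $(c',1)$, and paths in $C$ lift to paths between the corresponding copies. Moreover, there are no edges between $V(C)\times[n]$ and $V(C')\times[n]$ for distinct components $C,C'$. If instead $C=\{g\}$ is an isolated vertex of $\overline{G}$, then all $n$ vertices $(g,i)$ are isolated in $\overline{G\boxtimes K_n}$.

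\medskip\noindent
\textbf{Step 3: conclude.} Suppose $G$ is a join. Then $\overline{G}$ is disconnected, so by Step 2 the product complement has at least two components, and $G\boxtimes K_n$ is a join. Suppose $G$ is not a join. Since $G$ is non-trivial, $\overline{G}$ is connected on at least two vertices, so $\overline{G}$ has no isolated vertex. By Step 2, $\overline{G\boxtimes K_n}$ is then connected, and therefore $G\boxtimes K_n$ is not a join.

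The only delicate point is the case distinction in Step 2 between isolated and non-isolated vertices of $\overline{G}$. An isolated vertex splits into $n\ge 2$ components, which is harmless for the forward direction but is exactly where non-triviality of $G$ is needed in the backward direction.
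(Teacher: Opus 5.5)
Your proof is correct, but it takes a different route from the paper's.

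\textbf{The paper's route.} It never passes to complements. The forward direction is the identity $(G_1\vee G_2)\boxtimes K_n=(G_1\boxtimes K_n)\vee(G_2\boxtimes K_n)$. For the converse, it starts from a decomposition $G\boxtimes K_n=H_1\vee H_2$ and picks distinct $g_1,g_2$ whose layers ${}^{g_1}K_n$ and ${}^{g_2}K_n$ meet $H_1$ and $H_2$, respectively. It then projects the decomposition onto $V(G)$: a vertex $g$ goes into $X_1$ whenever its layer meets $H_1$, and into $X_2$ otherwise. The key observation is that a vertex of $H_1$ and a vertex of $H_2$ lying in different layers are adjacent, so their projections are adjacent in $G$. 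This gives $G=G[X_1]\vee G[X_2]$.

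\textbf{Your route.} You identify $\overline{G\boxtimes K_n}$ as the blow-up of $\overline{G}$ in which every vertex becomes an independent $n$-set. Everything then reduces to how the components of $\overline{G}$ lift.

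\textbf{What each buys.} Your approach costs only the easy lemma that a graph on at least two vertices is a join if and only if its complement is disconnected. In return:
\begin{itemize}
\item It describes all join decompositions of $G\boxtimes K_n$ at once.
\item It makes the role of non-triviality transparent. A universal vertex of $G$ (an isolated vertex of $\overline{G}$) splits into $n$ components, which is exactly why $K_1\boxtimes K_n=K_n$ is a join while $K_1$ is not. In the paper, this hypothesis enters only implicitly, through the existence of the pair $g_1\neq g_2$.
\item It generalizes verbatim. Since $G\boxtimes K_n$ is the lexicographic product of $G$ with $K_n$, and complementation commutes with lexicographic products, the same argument shows that for non-trivial $G$ the lexicographic product of $G$ with any graph is a join if and only if $G$ is.
\end{itemize}
The paper's projection argument is more direct and stays with the definition of a join. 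Its one delicate point is making sure every element of $X_1$ has a layer vertex in $H_1$ and every element of $X_2$ has one in $H_2$, which is what the seeding by $g_1,g_2$ ensures.
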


\proof
Assume $G = G_1\vee G_2$. Then $G\boxtimes K_n = (G_1\boxtimes K_n) \vee (G_2\boxtimes K_n)$. 

Conversely, assume that $G\boxtimes K_n = H_1\vee H_2$. Then there exist vertices $g_1, g_2\in V(G)$, $g_1\ne g_2$, such that the layer $^{g_1}K_n$ contains a vertex of $H_1$ and the layer $^{g_2}K_n$ contains a vertex of $H_2$. We now partition $V(G)$ into $X_1$ and $X_2$ as follows. First add $g_1$ to $X_1$ and $g_2$ to $X_2$. Further, for any other vertex $g\in V(G)$, we add $g$ to $X_1$ if there exists $y\in V(K_n)$ such that $(g,y)\in V(H_1)$; otherwise we put $g$ to $X_2$. We now observe that if $(g,y)\in V(H_1)$ and $(g',y')\in V(H_2)$, where $g\ne g'$, then $gg'\in E(G)$. We can conclude that $G = G[X_1]\vee G[X_2]$.
\qed

\section{Mutual-visibility and independence}
\label{sec:visibility}

Clearly, if a graph $G$ has at least one edge, then not every MV set is independent. In this section we are interested in graphs in which every independent set is a MV set. Note first that this property holds for all graphs $G$ with $\diam(G)\le 3$, see~\cite[Lemma 2.1]{cicerone-2023}. 

Let $u$ and $v$ be two vertices of a graph $G$. Recall that $S\subset V(G)$ of $G$ is {\em $\{u, v\}$-critical}, if $d_{G-S}(u,v) > d_G(u,v)$ and $u, v\notin S$. We further say that $S$ is a {\em strong $\{u, v\}$-critical set} if vertices from $S$ are adjacent to none of $u$ and $v$.  If $S$ is (strong) $\{x,y\}$-critical for some vertices $x$ and $y$ we simply say that $S$ is a (strong) critical set. Using these concepts we can characterize graphs in which every independent set is a MV set in the following way. 

\begin{theorem}
\label{thm:indep-is-MV}
For a  graph $G$, the following statements are equivalent.
\begin{enumerate}
\item [(i)] Every independent set of $G$ is a MV set.
\item[(ii)] $G$ contains no independent, strong critical set. 
\end{enumerate}
\end{theorem}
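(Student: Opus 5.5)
We prove both implications by contraposition. The main tool is a translation between failure of visibility and criticality: for an independent set $X$ and vertices $u,v\in X$, the pair $u,v$ is not $X$-visible exactly when every shortest $u,v$-path has an internal vertex in $X$. Equivalently, $d_{G-(X\setminus\{u,v\})}(u,v)>d_G(u,v)$, so $X\setminus\{u,v\}$ is a $\{u,v\}$-critical set.

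\emph{(i) $\Rightarrow$ (ii).} Suppose $G$ contains an independent, strong $\{u,v\}$-critical set $S$. We claim that $X=S\cup\{u,v\}$ is independent. The set $S$ is independent by assumption, and no vertex of $S$ is adjacent to $u$ or $v$ by strongness. It remains to see that $uv\notin E(G)$. If $uv$ were an edge, then $d_G(u,v)=1$, and since $S$ does not contain $u$ or $v$, the edge $uv$ survives in $G-S$. This would give $d_{G-S}(u,v)=1$, contradicting criticality. (If $u=v$ were allowed, the distance could not increase, so $u\ne v$.) Next, since $d_{G-S}(u,v)>d_G(u,v)$, every shortest $u,v$-path of $G$ meets $S$. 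The vertices $u,v$ are not in $S$, so every such path meets $S$ in an internal vertex. Hence $u,v$ are not $X$-visible, so $X$ is an independent set that is not a MV set, which violates (i).

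\emph{(ii) $\Rightarrow$ (i).} Suppose $X$ is an independent set that is not a MV set. Then there are $u,v\in X$ that are not $X$-visible; here $u\ne v$, and $uv\notin E(G)$ by independence. Put $S=X\setminus\{u,v\}$. Then $S$ is independent and $u,v\notin S$. Every shortest $u,v$-path of $G$ has an internal vertex in $X$, and that vertex lies in $S$, so $d_{G-S}(u,v)>d_G(u,v)$ and $S$ is $\{u,v\}$-critical. Finally, $S$ is strong: a vertex of $S$ adjacent to $u$ or $v$ would give an edge inside $X$, contradicting independence of $X$. Thus $S$ is an independent, strong critical set, contradicting (ii).

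No step here is a genuine obstacle. The only points needing care are ruling out the degenerate cases, namely that $u,v$ are distinct and nonadjacent, and checking that the obstruction must lie in $S$ rather than at the endpoints $u,v$. The first direction has to derive these facts from criticality, while the second gets them directly from independence of $X$.
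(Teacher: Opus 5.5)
Your proof is correct and follows the paper's argument in both directions: you adjoin $u,v$ to an independent strong $\{u,v\}$-critical set to get an independent set that is not an MV set, and conversely you take $X\setminus\{u,v\}$ as the blocking set (the paper uses $(X\cap I_G[u,v])\setminus\{u,v\}$ instead, which makes no difference). You are slightly more careful than the paper in deriving $uv\notin E(G)$ from criticality; like the paper, you tacitly assume $G$ is connected, so that a shortest $u,v$-path exists.
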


\proof
Assume first that every independent set of $G$ is a MV set. Suppose for a contradiction that $S$ is an independent, strong $\{u,v\}$-critical for some vertices $u,v\in V(G)$. Since $S$ is an independent and strong critical set, $S' = S\cup \{u,v\}$ is also independent. Hence by (i), $S'$ is a MV set. But since $S$ is $\{u,v\}$-critical, the vertices $u$ and $v$ are not $S'$-visible, a contradiction. 

Assume second that $G$ contains no independent, strong critical set. Consider an arbitrary independent set $S$ of $G$. Let $u$ and $v$ be two vertices of $S$. Suppose for a contradiction that $u, v\in S$ are not $S$-visible. Set $T = \left(S\cap I_G[u,v]\right)\setminus \{u,v\}$. Since $u$ and $v$ are not $S$-visible, $T$ blocks all shortest $u,v$-paths, that is, $T$ is $\{u,v\}$-critical. In addition, since $S$ is independent, $T$ is a strong $\{u,v\}$-critical set. This contradiction completes the argument. 
\qed

Setting $I_k[u,v] = \{w\in I_G[u,v]:\ d_G(u,w) = k\}$, where $0\le k\le d_G(u,v)$, we have the following consequence of Theorem~\ref{thm:indep-is-MV}. 

\begin{corollary}
\label{cor:m-visibility and ind}
For a  graph $G$ with $\diam(G) = 4$, the following statements are equivalent.
\begin{enumerate}
\item [(i)] Every independent set of $G$ is a MV set.
\item[(ii)] If $d_G(u,v) = 4$, then $G[I_2[u,v]]$ contains an edge.    
\end{enumerate}
\end{corollary}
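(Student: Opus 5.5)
We derive the corollary from Theorem~\ref{thm:indep-is-MV}. It therefore suffices to show that, when $\diam(G)=4$, $G$ contains an independent, strong critical set if and only if there are vertices $u,v$ with $d_G(u,v)=4$ such that $I_2[u,v]$ is an independent set.

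First we note which pairs $\{u,v\}$ can admit a strong $\{u,v\}$-critical set $S$. Removing $S$ must increase $d(u,v)$, so $S$ must meet every shortest $u,v$-path in an internal vertex. Since $S$ contains no neighbours of $u$ or $v$, every such vertex lies in $I_k[u,v]$ with $2\le k\le d_G(u,v)-2$. If $d_G(u,v)\le 3$, there is no such level; indeed any internal vertex of a shortest path of length at most $3$ is adjacent to $u$ or to $v$. Hence $d_G(u,v)=4$, because $\diam(G)=4$. In that case the only admissible vertices are those of $I_2[u,v]$. This matches the cited fact that graphs of diameter at most $3$ have the property.

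(ii) $\Rightarrow$ (i). Suppose $S$ is an independent, strong $\{u,v\}$-critical set. By the above, $d_G(u,v)=4$. Every shortest $u,v$-path passes through exactly one vertex of $I_2[u,v]$, and every vertex of $I_2[u,v]$ lies on some shortest $u,v$-path. Since $S$ blocks all shortest paths and only its part $S\cap I_2[u,v]$ is relevant, we get $I_2[u,v]\subseteq S$. Then $I_2[u,v]$ is independent, which contradicts (ii). So no such $S$ exists, and Theorem~\ref{thm:indep-is-MV} gives (i).

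(i) $\Rightarrow$ (ii). Suppose $d_G(u,v)=4$ and $I_2[u,v]$ is independent. We claim that $S=I_2[u,v]$ is an independent, strong $\{u,v\}$-critical set. It meets every shortest $u,v$-path, so $d_{G-S}(u,v)>4$. Its vertices are at distance $2$ from both $u$ and $v$, so none is adjacent to $u$ or $v$. This contradicts (ii) of Theorem~\ref{thm:indep-is-MV}.

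The only step that needs care is the claim that a blocking set must contain all of $I_2[u,v]$. For this we use that each $w\in I_2[u,v]$ lies on a shortest path $u\,a\,w\,b\,v$, and the internal vertices $a,b$ other than $w$ are excluded from $S$ by strongness.
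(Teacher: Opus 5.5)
Your proof is correct and follows the paper's route: reduce to Theorem~\ref{thm:indep-is-MV} by showing that, when $\diam(G)=4$, an independent strong critical set can exist only for a pair $u,v$ with $d_G(u,v)=4$, and must then contain all of $I_2[u,v]$. The paper states this equivalence in a single sentence; you have supplied the details it leaves implicit.
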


\proof
Since $\diam(G) = 4$, we infer that the condition (ii) of Theorem~\ref{thm:indep-is-MV} and the condition (ii) of Corollary~\ref{cor:m-visibility and ind} are equivalent. 
\qed

Corollary~\ref{cor:m-visibility and ind} does not extend (at least in a straightforward way) to graphs with diameter at least five. To see it, consider the sporadic example drawn in Fig.~\ref{fig:diam5}. Its diameter is five, the four black vertices form an independent set $S$, but the vertices $x$ and $y$ are not $S$-visible.

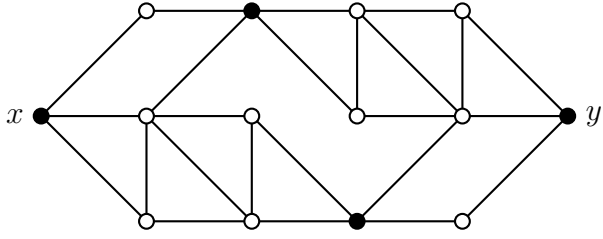
\begin{figure}[ht!]
\begin{center}
\begin{tikzpicture}[scale=0.7,style=thick,x=1cm,y=1cm]
\def\vr{4pt}
\def\radius{3cm}
    
\coordinate(x1) at (0,0);
\coordinate(x2) at (2,0);
\coordinate(x3) at (4,0);
\coordinate(x4) at (6,0);
\coordinate(x5) at (8,0);
\coordinate(x6) at (10,0);
\coordinate(y2) at (2,2);
\coordinate(y3) at (4,2);
\coordinate(y4) at (6,2);
\coordinate(y5) at (8,2);
\coordinate(z2) at (2,-2);
\coordinate(z3) at (4,-2);
\coordinate(z4) at (6,-2);
\coordinate(z5) at (8,-2);

\draw (x1) -- (y2) -- (y3) -- (y4) -- (y5) -- (x6) -- (z5) -- (z4) -- (z3) -- (z2) -- (x1);
\draw (x1) -- (x2) -- (x3) -- (z4) -- (x5) -- (x6);
\draw (z2) -- (x2) -- (z3) -- (x3);
\draw (x2) -- (y3) -- (x4) -- (y4) -- (x5) -- (y5);
\draw (x4) -- (x5);

\draw(x1)[fill=black] circle(\vr);
\draw(x2)[fill=white] circle(\vr);
\draw(x3)[fill=white] circle(\vr);
\draw(x4)[fill=white] circle(\vr);
\draw(x5)[fill=white] circle(\vr);
\draw(x6)[fill=black] circle(\vr);
\draw(y2)[fill=white] circle(\vr);
\draw(y3)[fill=black] circle(\vr);
\draw(y4)[fill=white] circle(\vr);
\draw(y5)[fill=white] circle(\vr);
\draw(z2)[fill=white] circle(\vr);
\draw(z3)[fill=white] circle(\vr);
\draw(z4)[fill=black] circle(\vr);
\draw(z5)[fill=white] circle(\vr);

\node at (-0.5,0) {$x$};
\node at (10.5,0) {$y$};

\end{tikzpicture}
\caption{An independent set of a diameter five graph which is not a MV set}
\label{fig:diam5}
\end{center}
\end{figure}

Using the strong product, we can construct graphs of arbitrarily large diameter in which every independent set is a (total) MV set. 

\begin{proposition}\label{prop:strong product}
If $n\ge 2$ and $G$ is a graph, then every independent set of $G\boxtimes K_n$ is a total MV set.  
\end{proposition}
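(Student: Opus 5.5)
The plan is to reduce the statement to Theorem~\ref{thm:dense-diamond-independent}: it suffices to show that $G\boxtimes K_n$ is a dense diamond graph, since then (iii)$\Rightarrow$(i) of that theorem gives that every independent set is a total MV set. (If $G$ is disconnected, I would apply the argument componentwise. Independence and total visibility are checked pair by pair, and pairs in different components have no shortest path at all, so I would follow the paper's implicit convention and assume $G$ is connected.) Alternatively, one can argue directly via Theorem~\ref{thm:distance two}(iii); both routes rest on the same observation.

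First I record the basic structure of distances in $X = G\boxtimes K_n$. Two vertices $(g,i)$ and $(g',j)$ are adjacent if and only if $g = g'$ and $i\ne j$, or $gg'\in E(G)$. Consequently $d_X((g,i),(g',j)) = d_G(g,g')$ whenever $g\ne g'$, and the distance is $1$ when $g=g'$ and $i\neq j$. Hence if $d_X(x,y)=2$ with $x=(g,i)$ and $y=(g',j)$, then $d_G(g,g')=2$, and there is a vertex $h\in V(G)$ adjacent to both $g$ and $g'$.

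Next I exhibit the diamond. Since $n\ge 2$, pick distinct $a,b\in V(K_n)$. The vertices $(h,a)$ and $(h,b)$ are adjacent, because they lie in the same $K_n$-layer. Each of them is adjacent to both $x$ and $y$, because $hg, hg'\in E(G)$. Meanwhile $x$ and $y$ are nonadjacent, since their distance is $2$. So $x,(h,a),y,(h,b)$ span a diamond containing $x$ and $y$, which shows that $X$ is a dense diamond graph.

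There is no real obstacle here. The only point needing care is the distance formula in the strong product with a complete graph, namely that vertices at distance $2$ in $X$ project to vertices at distance exactly $2$ in $G$. This follows from $\diam$-type reasoning, since projection to $G$ is distance-nonincreasing and a $G$-path lifts to $X$. Equivalently, in the language of Theorem~\ref{thm:distance two}(iii), the common neighbourhood of $x$ and $y$ contains the clique $\{(h,a),(h,b)\}$. An independent set $X'$ meets this clique in at most one vertex, so $N(x)\cap N(y)\not\subseteq X'$.
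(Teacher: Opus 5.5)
Your argument is correct, but it takes a different route from the paper. The paper works directly with an arbitrary shortest $x,y$-path $P$ in $G\boxtimes K_n$. Whenever an inner vertex $(g,i)$ of $P$ lies in the independent set $S$, it is swapped for $(g,i')$ with $i'\neq i$. That vertex is outside $S$ because it is adjacent to $(g,i)$, and the swap is repeated until $P$ has no inner vertex in $S$.

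You instead reduce, via Theorem~\ref{thm:distance two} or Theorem~\ref{thm:dense-diamond-independent}, to pairs at distance two. There the only thing to check is that such pairs project to pairs at distance two in $G$, so that a common neighbour $h$ puts the edge $(h,a)(h,b)$ inside $N(x)\cap N(y)$.

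Your route proves the remark in Section~\ref{sec:dense-diamond} that $G\boxtimes K_n$ is a dense diamond graph, which the paper states without proof. It also shows the proposition is an immediate corollary of Theorem~\ref{thm:dense-diamond-independent}. Finally, it sidesteps two small facts the swapping argument uses tacitly:
\begin{itemize}
\item the two neighbours of an inner vertex $(g,i)$ on a shortest path have first coordinate different from $g$, so $(g,i')$ is adjacent to both;
\item $(g,i')$ is not already on $P$.
\end{itemize}
The paper's argument, on the other hand, is self-contained and does not rely on the external Theorem~\ref{thm:distance two}.

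One minor correction: treating a disconnected $G$ componentwise does not work. Vertices in different components are never visible, so no set, not even $\emptyset$, is a total MV set. Connectivity is therefore a genuine tacit hypothesis, and assuming it, as you and the paper both end up doing, is the right reading.
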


\proof
Let $S$ be an independent set of $G\boxtimes K_n$ and let $x,y\in V(G)$. Let $P$ be an arbitrary shortest $x,y$-path. Assume that $P$ contains an inner vertex $(g,i)\in S$. If $i'\in V(K_n)$, $i'\ne i$, then $(g,i')\notin S$. In $P$, we can replace $(g,i)$ with $(g,i')$ to get another shortest $x,y$-path. Continuing this process as long as necessary we arrive at a shortest $x,y$-path which contains no inner vertices of $S$. 
\qed

\section{Algorithmic time complexity of related problems}
\label{sec:NP-c}

The independent mutual-visibility number of a graph $G$ was defined in~\cite{cicerone-2023} as the maximum cardinality of a MV set of $G$ that is also independent. Here we refer to it as \emph{independent MV number} and denote it by $\mui(G)$.
The \emph{independent total, dual, and outer MV numbers} are defined analogously and denoted by $\muti(G)$, $\mudi(G)$, and $\muoi(G)$, respectively. By definition, 
\begin{equation} \label{eq:mui-1}
    0 \leq \sigma^{\mbox i}(G) \leq \min\{\alpha(G), \sigma(G)\}
\end{equation}
 holds for every $\sigma \in \{\mu, \mut, \mud, \muo\}$ and graph $G$. It also follows by the definitions that
\begin{equation} \label{eq:mui-2}
  \muti(G) \leq \mudi(G) \leq \mui(G) \quad \mbox{and}
\quad \muti(G) \leq \muoi(G) \leq \mui(G).  
\end{equation} 

\begin{theorem} \label{thm:NP}
    For each parameter $\sigma \in \{\mui, \muti, \mudi, \muoi\}$, the following statements hold.
    \begin{itemize}
        \item[$(i)$] For every fixed $d \ge 4$, it is NP-complete to decide whether $\sigma(G) \ge k$ holds over the class of graphs of diameter $d$, where the integer $k$ is a part of the input. 
        \item[$(ii)$]  For every fixed $d \ge 4$, it is NP-hard to decide whether $\sigma(G) = \alpha(G)$ holds over the class of graphs of diameter $d$. 
    \end{itemize}
    Further, the NP-completeness of the problem in (i) and the NP-hardness of the problem in (ii) remain true for graphs of diameter three when  $\sigma \in \{\muti, \mudi, \muoi\}$.
\end{theorem}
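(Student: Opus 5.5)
We reduce from the independent set problem (deciding $\alpha(H)\ge k$ is NP-complete), using a construction that converts every independent set of $H$ into a visibility set of each of the four types. Membership in NP for (i) is clear: given a candidate set, independence and each visibility condition can be checked in polynomial time by computing distances and, for every relevant pair $u,v$, testing whether $u$ and $v$ are connected in the subgraph induced by $I_G[u,v]$ minus the inner vertices of $X$.

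\emph{Construction.} Given a graph $H$ on vertex set $\{h_1,\dots,h_n\}$, we let $G_0$ be $H$ together with a new vertex $w$ adjacent to all of $V(H)$, and then take a strong product $G_0\boxtimes K_2$ (or a join-type modification). By Proposition~\ref{prop:strong product}, every independent set of $G_0\boxtimes K_2$ is a total MV set, hence also a dual, outer and ordinary MV set. Therefore $\sigma=\alpha$ for all four parameters, and $\alpha(G_0\boxtimes K_2)=\alpha(G_0)=\max\{\alpha(H),1\}$. This already gives NP-hardness of computing $\sigma$, with diameter $2$, but not the hardness of the equality question (ii). To control the diameter, we attach a path-like gadget of prescribed length, for instance a pendant structure of the form $K_2$-blocks chained in a strong-product fashion, so that the diameter becomes exactly $d$ while the property ``every independent set is a total MV set'' is preserved. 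Proposition~\ref{prop:strong product} applies to $G\boxtimes K_n$ for every $G$, so we take $G = G_0$ with a pendant path $P_{d-1}$ attached at $w$, and then form $(G)\boxtimes K_2$. Its diameter equals $\diam(G)=d$ for $d\ge 3$, and $\alpha$ shifts by a computable constant. This settles (i) for all four parameters and all diameters $d\ge 3$, which in particular covers the diameter-three claim for $\muti,\mudi,\muoi$.

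For (ii) we need the opposite: graphs in which $\sigma(G)=\alpha(G)$ holds if and only if some NP-hard property holds. The plan is to glue to $H$ a gadget in which a maximum independent set must either use vertices of $H$ or use a ``blocking'' configuration. Concretely, for $\mui$ and diameter $4$ we use Corollary~\ref{cor:m-visibility and ind}: we create pairs $u,v$ at distance $4$ whose middle layer $I_2[u,v]$ is a copy of an independent part, so that a maximum independent set containing $u,v$ and the middle layer fails visibility. We then balance the sizes so that $\alpha(G)$ is attained by such a configuration exactly when $\alpha(H)<k$. Equivalently, we compare two candidate maxima: an independent set of $H$ of size $k$ plus a safe gadget portion, versus a blocking configuration of fixed size $k+c$. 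Then $\sigma(G)=\alpha(G)$ iff $\alpha(H)\ge k$. For the total, dual and outer versions, blocking is easier since distance-$2$ pairs suffice (Theorem~\ref{thm:distance two}), which allows diameter $3$.

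The main obstacle is this gadget for (ii): ensuring that no mixed independent set, combining part of $H$ with part of the gadget, beats the intended optimum while remaining visible. It is also delicate to keep the diameter exactly $d$ by pendant extensions that neither create new independent-set gains nor break visibility. The first thing to try is to make all extension parts strong products with $K_2$, so that Proposition~\ref{prop:strong product} neutralizes them.
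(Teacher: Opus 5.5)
Your argument for (i) is correct and takes a different route from the paper. The two vertices of each $K_2$-layer of $G\boxtimes K_2$ are true twins, so Proposition~\ref{prop:strong product} gives $\mui=\muti=\mudi=\muoi=\alpha$ on these graphs. Combined with $\alpha(G\boxtimes K_2)=\alpha(G)$ and $\diam(G\boxtimes K_2)=\diam(G)$, NP-completeness of Independent Set on graphs of diameter $d$ transfers directly. Your cone over $H$ with a pendant path supplies that, with $\alpha$ shifted by a constant depending only on $d$. This yields (i) for all four parameters and every $d\ge 2$, slightly more than the paper states; it is the idea the paper uses only in Proposition~\ref{prop:NP}, for $\mui$ via $\diam(G)\le 3$.

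One fix is needed. Your NP-membership test is wrong as stated: $u$ and $v$ being connected in $G[I_G[u,v]]$ minus the inner vertices of $X$ does not yield a \emph{shortest} path avoiding $X$. For a counterexample, take two internally disjoint $u,v$-paths of length $4$ joined by an edge between their middle vertices. Block the vertex of one path next to $v$ and the vertex of the other next to $u$. Test $d_{G-(X\setminus\{u,v\})}(u,v)=d_G(u,v)$ instead.

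The genuine gap is (ii), including its diameter-three version. Your strong-product graphs produce only yes-instances of $\sigma(G)=\alpha(G)$. In their place you describe a gadget you have not built, and the obstacle you flag is the entire content of (ii). For a no-instance you must show that \emph{every} maximum independent set of $G$ fails to be a visibility set of the relevant type. That requires full control over all maximum independent sets, including sets that mix $H$ with the gadget. A reduction from Independent Set gives you no handle on the structure of these sets. Also, Theorem~\ref{thm:distance two} concerns total MV sets only, so restricting attention to distance-$2$ pairs is not available for the dual and outer versions.

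The missing idea is a reduction in which maximum independent sets are rigid and their visibility encodes the hard question. The paper reduces from 3-SAT. In $G_d(F)$, the edges $x_i^+x_i^-$, the clause clique with pendant leaves $c_j'$, and the chain of triangles on $Z$ force the structure described in Claim~\ref{cl:1}: $\alpha(G_d(F))=n+\ell+d-3$, and every maximum independent set has the following form.
\begin{itemize}
\item[(a)] It contains one vertex of each literal pair, read as the \emph{false} literal, and hence avoids $v_1$.
\item[(b)] It contains $d-3$ vertices of $Z$, w.l.o.g.\ including $z_1$.
\item[(c)] It contains all leaves $c_j'$ except possibly one, exchanged for its clause vertex.
\end{itemize}
The shortest $z_1,c_j'$-paths are exactly $z_1v_1uc_jc_j'$ with $u$ a literal vertex of $C_j$. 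So visibility forces a true literal in every clause. Conversely, a satisfying assignment gives an independent total MV set of size $\alpha$ (Claim~\ref{cl:2}). Hence $\sigma\ge n+\ell+d-3$ and $\sigma=\alpha$ are each equivalent to satisfiability, and one construction settles both (i) and (ii). Deleting $z_1$ and $v_2$ handles diameter three for $\muti,\mudi,\muoi$.
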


\proof To prove (i) and (ii), we use the same polynomial-time reduction from the 3-SAT problem. Let $F= C_1 \wedge \cdots \wedge C_\ell$ be an instance of the 3-SAT problem over the $n$ variables $x_1, \dots, x_n$. Hence, for every $j \in [\ell]$, the clause $C_j$ is the disjunction of three literals over $\{x_1, \dots, x_n\}$. It can be assumed that every positive and negative literal is present in some clause. We construct a graph $G_d(F)$ of diameter $d$ for every $d \ge 4$, and prove that, for each $\sigma \in \{\mui, \muti, \mudi, \muoi\}$, the relations  $\sigma(G_d(F))\ge n+\ell + d-3$ and $ \sigma(G_d(F))= \alpha(G_d(F))$ hold if and only if $F$ is satisfiable.
\begin{figure}[ht!]
\begin{center}
\begin{tikzpicture}[scale=0.9,style=thick,x=1cm,y=1cm]
\def\vr{3pt}
\begin{scope}[xshift=-0cm, yshift=0cm] 
\coordinate(x1+) at (0.0,0.0);
\coordinate(x1-) at (1.5,0.0);
\coordinate(x2+) at (3.0, 0.0);
\coordinate(x2-) at (4.5,0.0);
\coordinate(x3+) at (6.0,0.0);
\coordinate(x3-) at (7.5,0.0);
\coordinate(x4+) at (9.0,0.0);
\coordinate(x4-) at (10.5,0.0);
\coordinate(c1) at (2.5,-3.0);
\coordinate(c2) at (5.0,-3.0);
\coordinate(c3) at (7.5,-3.0);
\coordinate(c11) at (2.5,-4.5);
\coordinate(c21) at (5.0,-4.5);
\coordinate(c31) at (7.5,-4.5);
\coordinate(v1) at (5.25,2);
\coordinate(v2) at (5.25,3);
\coordinate(v3) at (5.25,4);
\coordinate(z1) at (6.25,3);
\coordinate(z2) at (6.25,4);
\draw (x1+) -- (x1-); 
\draw (x2+) -- (x2-); 
\draw (x3+) -- (x3-); 
\draw (x4+) -- (x4-); 
\draw (x1+) -- (v1);  
\draw (x1-) -- (v1); 
\draw (x2+) -- (v1);  
\draw (x2-) -- (v1);
\draw (x3+) -- (v1);  
\draw (x3-) -- (v1); 
\draw (x4+) -- (v1);  
\draw (x4-) -- (v1);
\draw (v1) -- (v2) -- (z1) -- (v1);
\draw (v2) -- (v3) -- (z2) -- (v2);
\draw (c1) -- (x1+);  
\draw (c1) -- (x2+);
\draw (c1) -- (x3-);
\draw (c2) -- (x1-); 
\draw (c2) -- (x2-);
\draw (c2) -- (x4+);
\draw (c3) -- (x2+); 
\draw (c3) -- (x3+);
\draw (c3) -- (x4-);
\draw (c1) -- (c11);
\draw (c2) -- (c21);
\draw (c3) -- (c31);
\draw (c1) -- (c2);
\draw (c2) -- (c3);

\draw(x1+)[fill=white] circle(\vr);
\draw(x1-)[fill=black] circle(\vr);
\draw(x2+)[fill=black] circle(\vr);
\draw(x2-)[fill=white] circle(\vr);
\draw(x3+)[fill=white] circle(\vr);
\draw(x3-)[fill=black] circle(\vr);
\draw(x4+)[fill=black] circle(\vr);
\draw(x4-)[fill=white] circle(\vr);
\draw(c1)[fill=white] circle(\vr);
\draw(c2)[fill=white] circle(\vr);
\draw(c3)[fill=white] circle(\vr);
\draw(c11)[fill=black] circle(\vr);
\draw(c21)[fill=black] circle(\vr);
\draw(c31)[fill=black] circle(\vr);
\draw(v1)[fill=white] circle(\vr);
\draw(v2)[fill=white] circle(\vr);
\draw(v3)[fill=white] circle(\vr);
\draw(z1)[fill=black] circle(\vr);
\draw(z2)[fill=black] circle(\vr);
\draw (2.5,-3.0) .. controls (4.1,-2.6) and (5.9,-2.6) .. (7.5,-3.0);
\node at (-0.4,-0.3) {$x_1^{+}$};
\node at (1.1,-0.3) {$x_1^{-}$};
\node at (2.6,-0.3) {$x_2^{+}$};
\node at (4.1,-0.3) {$x_2^{-}$};
\node at (5.6,-0.3) {$x_3^{+}$};
\node at (7.8,-0.3) {$x_3^{-}$};
\node at (9.1,-0.3) {$x_4^{+}$};
\node at (10.9,-0.3) {$x_4^{-}$};
\node at (4.85,2.3) {$v_1$};
\node at (4.85,3.3) {$v_2$};
\node at (4.85,4.3) {$v_3$};
\node at (6.50,3.3) {$z_1$};
\node at (6.50,4.3) {$z_2$};
\node at (2.1,-3.3) {$c_1$};
\node at (4.6,-3.3) {$c_2$};
\node at (7.9,-3.3) {$c_3$};
\node at (2.1,-4.8) {$c_1'$};
\node at (4.6,-4.8) {$c_2'$};
\node at (7.9,-4.8) {$c_3'$};
\draw(c1)[fill=white] circle(\vr);
\draw(c2)[fill=white] circle(\vr);
\draw(c3)[fill=white] circle(\vr);
\end{scope}
\end{tikzpicture}
\caption{Graph $G_5(F)$ for $F = (x_1 \lor x_2 \lor \overline{x}_3) \land (\overline{x}_1 \lor \overline{x}_2 \lor x_4) \land(x_2 \lor x_3 \lor \overline{x}_4)$. Black vertices form an independent total MV set of cardinality $\alpha(G_5(F))$.}
\label{fig:4}
\end{center}
\end{figure}
\paragraph{Construction of $G_d(F)$.} Define first $X=\{x_1^+, x_1^-, \dots, x_n^+, x_n^-\}$, where the elements of $X$ are vertices in $G_d(F)$ representing the positive and negative literals over $\{x_1, \dots, x_n\}$ in a natural way. Let $C=\{c_1, \dots, c_\ell\}$ be the set of clause vertices and $C'= \{ c_1', \dots, c_\ell'\}$ be the set of leaves attached to the clause vertices. We further define the vertex set $Z= \{v_1, \dots, v_{d-2}\} \cup \{z_1, \dots, z_{d-3}\}$. To obtain $G_d(F)$, we set $X \cup C \cup C' \cup Z$ as its vertex set, and add the edges $x_i^+x_i^-$, $x_i^+v_1$, $x_i^-v_1$ for every $i \in [n]$,  the edges $c_jc_j'$ for every $j \in [\ell]$. Further, we make $C$ a complete graph and insert the edges $v_sv_{s+1}$, $v_sz_s$, and $v_{s+1}z_s$ into $Z$ for every $s \in [d-3]$. Observe that the order of $G_d(F)$ is $1+2(d-3) + 2n +2\ell$ and $\diam(G_d(F))=d$. For an illustration, see Fig.~\ref{fig:4}.

\begin{claim} \label{cl:1}
 It holds that $\alpha(G_d(F))= n+\ell+d-3$. Further, if $S$ is a maximum independent set in $G_d(F)$, then $|S \cap Z|= d-3$, $|S \cap X|=n$, and either $S \cap C=\emptyset$ and $S \cap C'=C'$, or $S \cap C=\{c_q\}$ and $S \cap C' = C' \setminus \{c_q'\}$ for an index $q \in [\ell]$. 
\end{claim}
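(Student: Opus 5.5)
The plan is to partition $V(G_d(F))$ into vertex sets of cliques and stars, bound each part separately, and then read off the structure of a maximum independent set from the equality case.

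\textbf{Partition.} The pieces are:
\begin{itemize}
\item the $n$ edges $x_i^+x_i^-$, each a $K_2$;
\item the triangles $v_{s+1}v_sz_s$ for $s\in[d-3]$, which together with $v_1$ cover $Z$;
\item the set $C\cup C'$.
\end{itemize}
For the upper bound, note that $S$ meets each edge $x_i^+x_i^-$ in at most one vertex, so $|S\cap X|\le n$.

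\textbf{Bounding $S\cap Z$.} The subgraph $G[Z]$ is a chain of $d-3$ triangles glued at the consecutive vertices $v_2,\dots,v_{d-3}$. Its vertex set is covered by the $d-3$ triangles $\{v_s,v_{s+1},z_s\}$, so $|S\cap Z|\le d-3$.

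\textbf{Bounding $S\cap(C\cup C')$.} Since $C$ is a clique, $|S\cap C|\le 1$. If $S\cap C=\{c_q\}$, then $c_q'\notin S$, so $|S\cap(C\cup C')|\le 1+(\ell-1)=\ell$. If $S\cap C=\emptyset$, then trivially $|S\cap(C\cup C')|\le |C'|=\ell$.

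Summing the three bounds gives $\alpha(G_d(F))\le n+\ell+d-3$.

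\textbf{Lower bound.} Take $\{x_1^+,\dots,x_n^+\}\cup\{z_1,\dots,z_{d-3}\}\cup C'$.
\begin{itemize}
\item The vertices $z_s$ have neighbours only among the $v$'s, so they are pairwise nonadjacent and not adjacent to $X$ or $C'$.
\item The vertices of $C'$ are leaves attached to $C$.
\item The vertices $x_i^+$ are pairwise nonadjacent, since $X$ induces a perfect matching.
\end{itemize}
So this set is independent of size $n+\ell+d-3$.

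\textbf{Structure of a maximum independent set.} If $|S|=n+\ell+d-3$, then each of the three bounds above must hold with equality. This gives $|S\cap X|=n$ and $|S\cap Z|=d-3$ directly. It also gives $|S\cap(C\cup C')|=\ell$, and the equality analysis splits into two cases:
\begin{itemize}
\item If $S\cap C=\emptyset$, then $S\cap C'=C'$.
\item If $S\cap C=\{c_q\}$, then $S$ contains all of $C'\setminus\{c_q'\}$, and $c_q'\notin S$.
\end{itemize}
These are exactly the two alternatives in the claim.

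The only point needing care is the $Z$ bound, namely that the triangles cover $Z$. This holds because the path $v_1,\dots,v_{d-2}$ together with the vertices $z_s$ is exactly the union of the triangles $\{v_s,v_{s+1},z_s\}$. There is no real obstacle; one only has to observe that edges between $X$ and $C$ and between $X$ and $v_1$ are irrelevant for the upper bound.
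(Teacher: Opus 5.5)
Your proof is correct and follows the paper's argument. You cover $Z$ by the $d-3$ triangles $\{v_s,v_{s+1},z_s\}$, use the matching $\{x_i^+x_i^-\}$ on $X$, bound $|S\cap(C\cup C')|$ by $\ell$, exhibit the same independent set of size $n+\ell+d-3$, and read off the structure from tightness of each bound. The only differences are minor: you bound $C\cup C'$ by a case split on $|S\cap C|\le 1$ rather than via the perfect matching $\{c_jc_j'\}$, which makes the two structural alternatives explicit, and the triangles already contain $v_1$, so ``together with $v_1$'' is unnecessary.
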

\noindent\emph{Proof.} Let $S$ be an independent set in $G_d(F)$. Since $Z$ can be covered by $d-3$ triangles, $|S \cap Z|\leq d-3$. Each of $X$ and $C \cup C'$ admits a perfect matching and therefore, $|S \cap X|\leq n$ and $|S \cap (C \cup C')| \leq \ell$.
This implies $\alpha(G_d(F)) \leq n+\ell+d-3$. Since 
$$S=\{z_1, \dots, z_{d-3}, x_1^+, \dots, x_n^+, c_1', \dots, c_\ell'\}$$
 is an independent set of cardinality $n+\ell+d-3$, the equality follows. We further recall that $C$ induces a complete graph and hence $|S \cap C| \leq 1$ holds for any independent set $S$.
 \smallqed
 
 Note that Claim~\ref{cl:1} and the upper bound in~\eqref{eq:mui-1} imply that the relations $\sigma(G_d(F))\ge  n+\ell+d-3$, $\sigma(G_d(F))=  n+\ell+d-3$, and $\sigma(G_d(F))=\alpha(G_d(F))$ are equivalent for every $\sigma \in \{\mui, \muti, \mudi, \muoi\}$.
\begin{claim} \label{cl:2}
  If $F$ is satisfiable, then $\sigma(G_d(F))=  n+\ell+d-3$ for each $\sigma \in \{\mui, \muti, \mudi, \muoi\}$. 
\end{claim}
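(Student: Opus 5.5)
Given a satisfying truth assignment, I will build an explicit independent set $S$ of size $n+\ell+d-3$ and show that it is a total MV set. Since $\muti$ is the smallest of the four parameters by \eqref{eq:mui-2}, and all of them are bounded above by $\alpha(G_d(F)) = n+\ell+d-3$ by \eqref{eq:mui-1} and Claim~\ref{cl:1}, this gives the equality for every $\sigma \in \{\mui, \muti, \mudi, \muoi\}$ at once. Following Fig.~\ref{fig:4}, I take
\[
S=\{z_1,\dots,z_{d-3}\}\cup C'\cup\{x_i^{\epsilon_i}: i\in[n]\},
\]
where $x_i^{\epsilon_i}$ is the literal vertex that is \emph{false} under the assignment: $x_i^-$ if $x_i$ is true, and $x_i^+$ otherwise. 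This set is clearly independent and has the right size.

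To show that $S$ is a total MV set, I will use Theorem~\ref{thm:distance two}(iii). It suffices to check that every pair $u,w$ with $d(u,w)=2$ has a common neighbour outside $S$. Most cases are routine.
\begin{itemize}
\item Pairs inside $Z$ are handled through the $v$-path: $v_s,v_{s+2}$ have the common neighbour $v_{s+1}\notin S$, and $z_s,v_{s+2}$ (or $v_{s-1},z_s$) likewise have a common $v$-neighbour. Pairs such as $z_s,z_{s+1}$ share $v_{s+1}$.
\item Pairs $x,v_2$ and $x,z_1$ share $v_1\notin S$.
\item Pairs of two literal vertices share $v_1$.
\item Pairs $c_j',c_k$ share $c_j\notin S$, and pairs $c_j',x$ with $x$ a literal in $C_j$ share $c_j$.
\item Pairs $x_i^{\pm},c_j$ where $c_j$ is adjacent to the complementary literal share that complementary literal, or $c_k$ for some other clause. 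This case needs care; see below.
\item Pairs $v_1,c_j$ share literal vertices, namely the literals of $C_j$.
\end{itemize}

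The main obstacle is the last two kinds of pairs, since there the common neighbours may lie in $X$, which meets $S$.

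For $v_1,c_j$: the common neighbours are exactly the three literal vertices of $C_j$. Because the assignment satisfies $C_j$, at least one of these literals is true, and its vertex is not in $S$. This is precisely where satisfiability enters.

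For a literal vertex $x$ and a clause vertex $c_j$ not adjacent to $x$: if $x$ occurs in some other clause $C_k$, then $c_k\notin S$ is a common neighbour, because $C$ is complete. This is where the assumption that every literal occurs in some clause is used. If $\ell=1$, the remaining case is that $x$ is the complement of a literal of $C_1$ but appears in no clause. The assumption that every literal appears rules this out, but I will double-check this edge case.

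Finally, I will confirm that I have not missed other distance-two pairs. Examples are $v_2$ with $c_j$, which are at distance $3$, and the pair $x_i^+,x_i^-$, which is adjacent. A systematic check over the vertex types $X,C,C',Z$ closes the argument.
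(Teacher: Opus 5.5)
Your proposal is correct and takes essentially the same approach as the paper. You use the same set (the false literal vertices together with $z_1,\dots,z_{d-3}$ and $C'$) and verify it through Theorem~\ref{thm:distance two}. Satisfiability enters exactly for the pairs $v_1,c_j$. The result transfers to $\mui,\mudi,\muoi$ via \eqref{eq:mui-1}, \eqref{eq:mui-2} and Claim~\ref{cl:1}, and your explicit appeal to the assumption that every literal occurs in some clause, for the pairs $x,c_j$, only spells out what the paper's phrase ``via a vertex from $C$'' leaves implicit.
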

\noindent\emph{Proof.}  Let $\phi\colon \{x_1, \dots, x_n\} \rightarrow \{\mbox{true, false}\}$ a truth assignment that satisfies $F$ and define 
$$ S'=\{z_1, \dots, z_{d-3}, c_1', \dots, c_\ell'\} \cup \{x_i^+\colon \phi(x_i)=\mbox{false}\} \cup 
\{x_i^-\colon \phi(x_i)=\mbox{true}\}.
$$
We first prove that $S'$ is an independent total MV set. It is straightforward to check that $S'$ is an independent set of cardinality $n+\ell+d-3$. To see that it is a total MV set, by Theorem~\ref{thm:distance two}, it suffices to show that any two vertices at a distance of two apart are $S'$-visible. Recalling that any two nonadjacent vertices from $X$ can see each other via $v_1$, the $S'$-visibility is easy to check inside $Z \cup X$.
As for $C \cup C'$, if two vertices from $C \cup C'$ are at a distance of two, then one of them is a vertex $c_j$ from $C$, and the other is a vertex $c_p'$ from $C'$ such that $j\neq p$. In this case, the vertices are $S'$-visible via $c_p$. 

What remains to check is the case of two vertices $u \in Z \cup X$ and $w \in C \cup C'$ with $d(u,w)=2$. If $u \in X$ and $w \in C \cup C'$, they can always see each other via a vertex from $C$. If $u \in Z$ and $w \in C$, then $u=v_1$ and $w=c_j$ for an integer $j \in [\ell]$ . Since the clause $C_j$ is satisfied by a literal $x_i$ or $\overline{x_i}$, the corresponding literal vertex, namely $x_i^+$ or $x_i^-$, does not belong to $S'$, and it therefore ensures the $S'$-visibility of $v_1$ and $c_j$. Consequently, $S'$ is an independent total MV set and $\muti(G_d(F))= n+\ell+d-3= \alpha(G_d(F))$.  Then, by definition, $S'$ is also an MV set, a dual MV set, and an outer MV set. Thus Claim~\ref{cl:2} also holds for $\sigma \in \{\mui, \mudi, \muoi\}$.  \smallqed 
\begin{claim}  \label{cl:3}
  For each invariant $\sigma \in \{\mui, \muti, \mudi, \muoi\}$, the equality  $\sigma(G_d(F))= n+\ell+d-3$ implies that $F$ is satisfiable. 
\end{claim}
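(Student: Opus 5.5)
By \eqref{eq:mui-2}, $\muti, \mudi, \muoi \le \mui$. Hence it suffices to prove the claim for $\sigma = \mui$: if any of the four invariants equals $n+\ell+d-3$, then so does $\mui$, because $\mui \le \alpha(G_d(F)) = n+\ell+d-3$ by Claim~\ref{cl:1} and \eqref{eq:mui-1}. So let $S$ be an independent MV set with $|S| = n+\ell+d-3$. Then $S$ is a maximum independent set, and Claim~\ref{cl:1} describes its structure: $|S\cap Z| = d-3$, exactly one of $x_i^+, x_i^-$ lies in $S$ for each $i$, and $S\cap(C\cup C')$ is either $C'$ or $(C'\setminus\{c_q'\})\cup\{c_q\}$.

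First I pin down $S\cap Z$. The set $Z$ is a chain of $d-3$ triangles $v_sv_{s+1}z_s$, and an independent set meets each triangle at most once. Suppose $v_{d-2}\in S$. Then I expect to find a pair of $S$-vertices whose every shortest path passes through another $S$-vertex on the $v$-path. The cleanest option is to show that every choice of $S\cap Z$ with one vertex per triangle contains some $v_s$ with $s\ge 2$, unless $S\cap Z = \{z_1,\dots,z_{d-3}\}$ or a choice in which $v_1\in S$. Any $v_s\in S$ with $2\le s\le d-3$ is a cut vertex separating $S$-vertices above it from the $S$-vertices in $X$. The case $v_1\in S$ is impossible for the same reason, since $v_1$ separates $X$ from the rest of $Z$. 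The remaining case $v_{d-2}\in S$ is excluded by counting: $v_{d-2}$ lies only in the last triangle, so the earlier triangles still contribute $S$-vertices that must see the $S$-vertices in $X$ through $v_1$; I would check this carefully. This step is fiddly but routine. I will treat it as secondary, since I may not even need $S\cap Z$ exactly; I only need that $S$ contains some vertex $w\in Z\setminus\{v_1\}$ with $d(w,v_1)\ge 1$. That holds because $d-3\ge 1$ and $v_1\in S$ is excluded.

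The main step uses the vertices $c_j'$ (or $c_q$). Fix a clause index $j$ with $c_j'\in S$; all indices except possibly $q$ qualify. Take $w\in S\cap Z$, $w\neq v_1$. Every shortest $w,c_j'$-path goes $w \to\cdots\to v_1\to x\to c_j\to c_j'$, where $x$ is a literal vertex adjacent to $c_j$, i.e.\ one of the three literals of $C_j$. Paths through other clause vertices are longer, because any route from $v_1$ to $C$ must pass through $X$. Mutual visibility of $w$ and $c_j'$ then requires a literal vertex of $C_j$ that is not in $S$. Moreover $c_j\notin S$ here, since $c_j\in S$ would block all such paths; this is consistent with $j\neq q$. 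Define $\phi(x_i)=$ true iff $x_i^+\notin S$. For each such $j$, the literal vertex of $C_j$ outside $S$ is exactly a literal made true by $\phi$, so $C_j$ is satisfied.

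The main obstacle is the possible exceptional clause $c_q$, for which $c_q\in S$ and $c_q'\notin S$. I would apply the same argument to the pair $w, c_q$. Every shortest $w,c_q$-path ends $v_1\, x\, c_q$ with $x$ a literal vertex of $C_q$, so some literal vertex of $C_q$ must avoid $S$, and $C_q$ is satisfied as well. Hence $\phi$ satisfies $F$. The remaining verification is the distance claim that shortest paths from $Z$ to $C\cup C'$ must go through $v_1$ and then directly into $N(c_j)\cap X$. This holds because $Z$ attaches to the rest of the graph only at $v_1$, and $N(v_1)$ outside $Z$ equals $X$.
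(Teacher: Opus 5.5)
Your argument is correct and is essentially the paper's. Visibility between an $S$-vertex of $Z$ and each $c_j'\in S$ forces some literal neighbour of $c_j$ to lie outside $S$, which makes $C_j$ true under $\phi$. Taking an arbitrary $w\in S\cap Z$ lets you skip the paper's normalisation $S\cap Z=\{z_1,\dots,z_{d-3}\}$, and you handle $c_q$ by visibility where the paper uses independence.

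Drop the side remarks in your second paragraph, however. First, $v_{d-2}\in S$ cannot be excluded: $v_{d-2}$ is a true twin of $z_{d-3}$, which is exactly why the paper's normalisation is a without-loss-of-generality step. Second, your cut-vertex reason for $v_1\notin S$ fails when $d=4$. The correct reason is that $v_1$ is adjacent to all of $X$ while $|S\cap X|=n\ge 1$. In any case, your main step works unchanged with $w=v_1$.
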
 
\noindent\emph{Proof.} Let $S''$ be an independent MV set of cardinality $n+\ell+d-3$ in $G_d(F)$. Claim~\ref{cl:1} then implies $|S'' \cap Z|= d-3$, $|S'' \cap X|=n$, and $|S'' \cap (C \cup C')|=\ell.$ Consequently, $S''$ contains exactly one vertex from each pair $\{x_i^+, x_i^-\}$, and $v_1 \notin S''$. We may assume, without loss of generality, that $S'' \cap Z=\{z_1, \dots, z_{d-3}\}$. Consider the truth assignment 
\begin{equation}  \label{eq:mui-3}
    \phi(x_i)= \left\{ \begin{array}{ll}
		\mbox{true}; & x_i^- \in S'', \\		
		\mbox{false}; & x_i^+ \in S''.
	\end{array} \right. 
\end{equation}

Assume first that $S''\cap C'=C'$. For each $j \in [\ell]$, the $S''$-visibility of $z_1 \in S''$ and $c_j' \in S''$ implies that there exists a path $z_1v_1uc_jc_j'$  such that $u \notin S''$. If $u=x_i^+$ for an index $i \in [n]$, then the clause $C_j$ in $F$ contains the literal $x_i$ and, according to~\eqref{eq:mui-3}, the clause $C_j$ is satisfied. The argument is analogous for $u=x_i^-$. It implies the satisfiability of $F$. 

In the second case, $S \cap C=\{c_q\}$ and $S \cap C' = C' \setminus \{c_q'\}$ for an index $q \in [\ell]$. As for the first case, if $c_j' \in S''$, then the clause $C_j$ is satisfied by the truth assignment in~\eqref{eq:mui-3}. For the vertex $c_q$, as $S''$ is an independent set, no neighbor of $c_q$ belongs to $S''$ and the clause $C_j$ is satisfied by $\phi$. Again, it shows that $F$ is satisfiable. 

If $\sigma \in \{\muti, \mudi, \muoi\}$, then 
$\sigma(G_d(F))=n+\ell+d-3$ implies $\mui(G_d(F))=n+\ell+d-3$, and the satisfiability of $F$ follows from the proof above.
\smallqed
\medskip

From a 3-SAT instance $F$, the graph $G_d(F)$ can be obtained in polynomial time for every $d\ge 4$. Let $\sigma \in \{\mui,\mudi, \muoi, \muti\}$. Claims~\ref{cl:1}, \ref{cl:2}, and \ref{cl:3} (together with~\eqref{eq:mui-1} and \eqref{eq:mui-2}) show that each of $\sigma(G_d(F))\ge n + \ell+d-3$ and $\sigma(G_d(F))= \alpha(G_d(F))$ holds if and only if $F$ is satisfiable. 
Therefore, the 3-SAT problem can be reduced to both decision problems of whether $\sigma(G) \ge k$ and $\sigma(G)=\alpha(G)$ hold for a graph of diameter $d$ for each fixed integer $d \ge 4$. This proves the NP-hardness of the decision problems in (i) and (ii).

Finally, we note that while the problems in (i) clearly belong to NP, we cannot state the same for the problems in (ii). This finishes the proof for (i) and (ii).
\paragraph{Graphs of diameter three.}
For an instance $F$ of the 3-SAT problem, we obtain the graph $G_3(F)$ by deleting $z_1$ and $v_2$ from $G_4(F)$. Then $\diam(G_3(F))=3$ and $\alpha(G_3(F))=n + \ell$. Assuming that $F$ is satisfiable, we can construct the set $S'$ as in the proof of Claim~\ref{cl:2} and prove that $S'$ is an independent total (and consequently dual and outer) MV set of cardinality $n + \ell$. For the other direction, we assume that there is an independent outer/dual/total MV set $S''$ of cardinality $n + \ell$ in $G_3(F)$. For the outer and total mutual-visibility, we consider the visibility of $v_1$ and the leaves $c_j' \in S''$, for $j \in [\ell]$, to conclude that every clause is satisfied by the truth assignment given in~\eqref{eq:mui-3}; while for the dual mutual-visibility we have the same conclusion by considering $v_1$ and the clause vertices $c_j$. This proves that the problems in (i) and (ii) remain NP-hard for graphs of diameter three when $\sigma \in \{\muti, \mudi, \muoi\}$.  
\qed

It is known \cite[Lemma 2.1]{cicerone-2023} that $\diam(G) \leq 3$ implies $\mui(G) = \alpha(G)$. Therefore, the NP-hardness of the decision problem in Theorem~\ref{thm:NP}(ii) cannot be extended to graphs of diameter three if the invariant $\mui(G)$ is considered.  
On the other hand, it is known by a simple construction~\cite{garey} that the decision problem of $\alpha(G) \ge k$ remains NP-complete over the class of graphs of diameter two and three. We may then conclude the following result.

\begin{proposition} \label{prop:NP}
    For $d \in \{2,3\}$, it is NP-complete to decide whether $\mui(G) \ge k$ holds over the class of graphs of diameter $d$, where $k$ is part of the input. 
\end{proposition}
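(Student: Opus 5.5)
Membership in NP is immediate: a certificate is a set $S$ of at least $k$ vertices, and we check in polynomial time that $S$ is independent and that every two vertices of $S$ are $S$-visible. For the latter, for each pair $u,v\in S$ we run a BFS from $u$ in $G-(S\setminus\{u,v\})$ and compare the resulting distance with $d_G(u,v)$.

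For hardness, the plan is to reduce from the independent set decision problem restricted to graphs of diameter $d\in\{2,3\}$, which is NP-complete by the construction cited from~\cite{garey}. The key tool is \cite[Lemma 2.1]{cicerone-2023}: if $\diam(G)\le 3$, then every independent set of $G$ is a MV set. We can also argue this directly, which is easy. Let $S$ be independent and $u,v\in S$. Then $u,v$ are nonadjacent, so $d_G(u,v)\in\{2,3\}$. If $d_G(u,v)=2$, any common neighbour lies outside $S$ by independence. If $d_G(u,v)=3$, any shortest path $uabv$ has $a$ adjacent to $u$ and $b$ adjacent to $v$, so $a,b\notin S$. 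Hence $\mui(G)=\alpha(G)$ whenever $\diam(G)\le 3$.

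The reduction is then the identity map: given an instance $(G,k)$ with $\diam(G)=d$, we output the same instance, and $\mui(G)\ge k$ holds if and only if $\alpha(G)\ge k$. The only step needing care is that the cited NP-completeness of $\alpha(G)\ge k$ genuinely applies to graphs of diameter exactly $2$ and exactly $3$.

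If only a generic hardness result were available, we would adjust diameters as follows. To get diameter $2$, take $H$ and form $H\vee K_1$. This adds the universal vertex, and $\alpha(H\vee K_1)=\alpha(H)$ whenever $H$ has an edge. To get diameter exactly $3$, attach a pendant path in a way that shifts $\alpha$ by a known constant, for example via a small gadget whose independence contribution is fixed. This adjustment is the main point to verify, and we would rely on the cited construction to settle it.
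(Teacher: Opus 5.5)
Your proposal is correct and follows the paper's own argument. Since $\diam(G)\le 3$ forces $\mui(G)=\alpha(G)$ (the paper cites Lemma~2.1 of Cicerone et al., which you also verify directly), the identity map reduces the independent set problem on graphs of diameter $2$ or $3$, known to be NP-complete via the construction cited from Garey and Johnson, to deciding $\mui(G)\ge k$; NP-membership is the routine check you describe. Your fallback paragraph is not needed given that citation, but if you keep it, note that $H\vee K_1$ has diameter exactly $2$ only when $H$ is non-complete (having an edge is not the right condition), and that for diameter $3$ a universal vertex $w$ together with a pendant path $wpp'$ works and raises $\alpha$ by exactly one.
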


\section{Open problems}
\label{sec:problems}

Proposition~\ref{prop:outer-inde in diam 2} gives a partial solution to the following problem. 

\begin{problem}
\label{prob:indep-outer}
Characterize the graphs in which every independent set is an outer MV set. 
\end{problem}

Comparing Theorems~\ref{thm:dense-diamond-independent} and \ref{thm:total-only-independent} we can derive that the following statements are equivalent for every connected graph $G$. 
\begin{enumerate}
\item[(i)] For every $X \subseteq V(G)$, the set $X$ is a  total MV set of $G$ if and only if $X$ is independent.
\item[(ii)] $G$ is a dense diamond graph and every edge of $G$ is the middle edge of a convex diamond.
\end{enumerate}

We were unable to find any graph that satisfies (ii) above, hence we pose the following:

\begin{problem}
Does there exist a dense diamond graph such that every edge of $G$ is the middle edge of a convex diamond?    
\end{problem}

Related to mutual-visibility and independence, we pose:

\begin{problem}
Characterize the graphs $G$ and $H$ for which every independent set of $G\boxtimes H$ is a MV set.
\end{problem}

\section*{Acknowledgments}

This work was supported by the Slovenian Research Agency (ARIS) under the grants P1-0297, N1-0285, N1-0355, N1-0431, J1-70045.

\end{document}